\documentclass[oneside,10pt]{article}
\usepackage[b5paper]{geometry}	

\usepackage{amsfonts,amsmath,latexsym,amssymb}
\usepackage{mathrsfs,upref}
\usepackage{mathptmx}		    	                            		
\usepackage{amscd}
\usepackage{amsthm}
\usepackage{graphicx}
\usepackage{amsmath}
\usepackage{lipsum}

\newcommand\blfootnote[1]{%
  \begingroup
  \renewcommand\thefootnote{}\footnote{#1}%
  \addtocounter{footnote}{-1}%
  \endgroup}

\newtheorem{theorem}{Theorem}

\newtheorem{corollary}[theorem]{Corollary}

\newtheorem{definition}[theorem]{Definition}

\newtheorem{lemma}[theorem]{Lemma}

\newtheorem{proposition}[theorem]{Proposition}
\newtheorem{remark}[theorem]{Remark}

\newcommand\supp{\mathop{\rm supp}}
\newcommand\esssup{\mathop{\rm ess \, sup}}
\newcommand\essinf{\mathop{\rm ess \, inf}}

\begin{document}

\title{Estimates for convolution operators on Hardy spaces associated with ball quasi-Banach function spaces}
\author{Pablo Rocha}

\maketitle

\begin{abstract}
Let $0 \leq \alpha < n$, $N \in \mathbb{N}$, and let $X$ and $Y$ be ball quasi-Banach function spaces on $\mathbb{R}^n$. We consider 
operators $T_{\alpha}$ defined by convolution with kernels of type $(\alpha, N)$. Assuming that the powered Hardy–Littlewood maximal 
operator satisfies some Fefferman–Stein vector-valued maximal inequality on $X$ and is bounded on the associated space, we prove that 
$T_0$, $\alpha = 0$, extends to a bounded operator $H_{X}(\mathbb{R}^n) \to X$ and $H_{X}(\mathbb{R}^n) \to H_{X}(\mathbb{R}^n)$; and, 
under certain additional assumptions on $X$ and $Y$, $T_{\alpha}$, $0 < \alpha < n$, extends to a bounded operator $H_{X}(\mathbb{R}^n) \to Y$ and $H_{X}(\mathbb{R}^n) \to H_{Y}(\mathbb{R}^n)$. In particular, from these results, it follows that singular integrals and the Riesz potential satisfy such estimates, respectively. We also provide an off-diagonal Fefferman-Stein vector-valued inequality for the fractional maximal operator on the $p$-convexification of ball quasi-Banach function spaces.
\end{abstract}

\blfootnote{{\bf Keywords}: singular integrals, fractional integrals, Hardy-type space, ball quasi-Banach function space. \\
{\bf 2020 Mathematics Subject Classification:} 42B20, 47A30, 42B30, 42B25, 42B35}

\section{Introduction}

The concept of ball quasi-Banach function space was introduced by Y. Sawano, K.-P. Ho, D. Yang and S. Yang in \cite{sawa} to unify, under 
a broader common framework than the one given by (quasi-)Banach function spaces, many variants of classical Hardy type spaces such as 
weighted Hardy spaces, Hardy-Lorentz spaces, Hardy-Orlicz spaces, Hardy-Herz spaces, Hardy-Morrey spaces, Musielak-Orlicz-Hardy spaces, 
and variable Hardy spaces, among others (see \cite{sawa} and the references therein). For instance, if we consider the space 
$L^{p}(\mathbb{R}^n)$, $0 < p < \infty$, with the usual quasi-norm given by $\| f \|_{p}^p = \int_{\mathbb{R}^n} |f(x)|^p dx$, then 
$X:=(L^{p}(\mathbb{R}^n), \| \cdot \|_{p})$ is a ball quasi-Banach function space and its Hardy type space associated 
$H_X (\mathbb{R}^n)$ coincides with $H^p(\mathbb{R}^n)$, where $H_X (\mathbb{R}^n)$ is defined by (\ref{Hx def}) below, and 
$H^p(\mathbb{R}^n)$ is the classical Hardy space defined in \cite{Stein}.

Two of the principal results obtained in \cite{sawa} are the atomic and molecular characterization of the Hardy space $H_X (\mathbb{R}^n)$ associated with a ball quasi-Banach function space $X$. These characterizations rely on the Fefferman-Stein vector-valued maximal inequality of the powered Hardy-Littlewood maximal operator on $X$ and its boundedness on the associate space $X'$ (see (\ref{A1}) and (\ref{A2}) below). As is well known, in the classic context, such decompositions are very useful when it studies the behavior of certain operators, as singular and fractional integrals, on a given Hardy type space (see for instance \cite{cuerva}, \cite{Nakai}, \cite{Ro-Urc}, \cite{Stein}, 
\cite{wheeden}, \cite{Taible}).

In \cite{Wang}, F. Wang et al. established a characterization, via Littlewood–Paley functions, for $H_X (\mathbb{R}^n)$ and obtained the boundedness of Calder\'on–Zygmund operators on $H_X (\mathbb{R}^n)$. They also studied local Hardy spaces $h_X(\mathbb{R}^n)$ associated with a ball Banach function space $X$. After that, D.-C. Chang et al. in \cite{Chang}, under some weak assumptions on the Littlewood-Paley functions, improved the existing results of the Littlewood-Paley function characterizations of $H_X (\mathbb{R}^n)$. Their results have applications in Morrey spaces, mixed-norm Lebesgue spaces, variable Lebesgue spaces, weighted Lebesgue spaces and Orlicz-slice spaces.

K.-P. Ho in \cite{Ho}, by means of extrapolation techniques, obtained mapping properties on Hardy spaces associated with ball quasi-Banach function spaces for strongly singular Calder\'on-Zygmund operators with applications to Hardy Orlicz-slice spaces, variable Hardy local Morrey spaces and variable Herz-Hardy spaces.

A finite atomic characterization of $H_X(\mathbb{R}^n)$ was given by X. Yan, D. Yang and W. Yuan in \cite{Yan}. As an
application, they prove that the dual space of $H_X(\mathbb{R}^n)$ is the Campanato space associated with $X$.
Recently, Y. Tan in \cite{Tan}, by means of this finite atomic characterization, proved the boundedness of multilinear fractional integral
operators from products of Hardy spaces associated with ball quasi-Banach function spaces into other ball quasi-Banach function space, which are related to each other. In \cite{Procha}, the present author pointed out that this kind of operators are not bounded from a product of Hardy spaces into a Hardy space.

Z. Nieraeth in \cite{Nieraeth}, assuming the boundedness of Hardy-Littlewood maximal operator $M$ on $[(X')^{\frac{1}{1-\frac{\alpha}{n}}}]'$ and $(X')^{\frac{1}{1-\frac{\alpha}{n}}}$, proved the $X \to ([(X')^{\frac{1}{1-\frac{\alpha}{n}}}]')^{1-\frac{\alpha}{n}}$ boundedness for the Riesz potential $I_{\alpha}$, $0 < \alpha < n$, where $X$ is a $\frac{n}{\alpha}$-concave Banach function space over $\mathbb{R}^n$. 
Later, Y. Chen, H. Jia and D. Yang in \cite{YChen}, assuming (\ref{A1}) and (\ref{A2}) below, prove that the Riesz potential 
$I_{\alpha}$ can be extended to a bounded operator from $H_X(\mathbb{R}^n)$ to $H_{X^{\beta}}(\mathbb{R}^n)$, $\beta > 1$, if and only in  if 
for any ball $B \subset \mathbb{R}^n$, $|B|^{\frac{\alpha}{n}} \lesssim \| \chi_B \|_{X}^{(\beta-1)/\beta}$, where $X$ is a ball quasi-Banach function space and $X^{\beta}$ denotes the $\beta$-convexification of $X$. Moreover, using extrapolation techniques, the authors also proved the $H_X(\mathbb{R}^n) \to H_Y(\mathbb{R}^n)$ boundedness of $I_{\alpha}$, for certain ball quasi-Banach function spaces $X$ and $Y$.

Let $0 \leq \alpha < n$ and $N \in \mathbb{N}$. For $0 < \alpha < n$, a function $K_{\alpha} \in C^{N}(\mathbb{R}^{n} \setminus \{ 0 \})$ is said to be a kernel of type $(\alpha, N)$ on $\mathbb{R}^{n}$ if
\begin{equation} \label{decay0}
\left|(\partial^{\beta}K_{\alpha})(x) \right| \lesssim |x|^{\alpha - n - |\beta|} \,\,\,\, \text{for all} \,\, |\beta| \leq N \,\,\, \text{and all} \,\, x \neq 0,
\end{equation}
where $\partial^{\beta}$ is the higher order partial derivative associated to the multiindex $\beta = (\beta_1, ..., \beta_n)$, and 
$|\beta| = \beta_1 + \cdot \cdot \cdot + \beta_{n}$. A distribution $K_{0}$ is said to be a kernel of type $(0,N)$ on $\mathbb{R}^{n}$ if is of class $C^{N}$ on $\mathbb{R}^{n} \setminus \{ 0 \}$, satisfies (\ref{decay0}) with $\alpha = 0$, and $\| K_0 \ast f \|_{2} \leq \| f \|_2$ for all $f \in \mathcal{S}(\mathbb{R}^{n})$. These kernels type were studied by G. Folland and E. Stein in \cite{Folland} on the homogeneous groups setting.

For $0 \leq \alpha < n$, let $T_{\alpha}$ be the convolution operator defined, say on $\mathcal{S}(\mathbb{R}^n)$, by 
$T_{\alpha}f = K_{\alpha} \ast f$, where $K_{\alpha}$ is a kernel of type $(\alpha, N)$. These operators include the classical singular and fractional integrals (see Sections \ref{Sing integ} and \ref{Fract integ} below). Our main result is contained in Theorem \ref{main thm}, Section \ref{resultados principales} below, this states that if $X$ and $Y$ are ball quasi-Banach function spaces satisfying certain hypotheses 
and $N$ is conveniently chosen, then, for $\alpha = 0$, the operator $T_0$ can be extended to a bounded operator $H_{X}(\mathbb{R}^n) \to X$ and $H_{X}(\mathbb{R}^n) \to H_{X}(\mathbb{R}^n)$; and, for $0 < \alpha < n$, the operator $T_{\alpha}$ can be extended to a bounded operator 
$H_{X}(\mathbb{R}^n) \to Y$ and $H_{X}(\mathbb{R}^n) \to H_{Y}(\mathbb{R}^n)$. 

To achieve our goals, under certain additional assumptions on $X$ (see Section \ref{hypotheses}), we will use the infinite atomic decomposition and the maximal characterization of $H_X(\mathbb{R}^n)$ established in \cite{sawa} together with a density argument and some
vector-valued inequalities of Section \ref{auxiliar}. Among them, we provide an off-diagonal Fefferman-Stein vector-valued inequality for the fractional maximal operator on the $p$-convexification of ball quasi-Banach function spaces, which is crucial to get our main result of Section \ref{resultados principales}.

\

The paper is organized as follows. Section \ref{basico} starts with the basics of the Hardy spaces theory associated with ball quasi-Banach function spaces. Some vector-valued maximal inequalities are established in Section \ref{auxiliar}. In Section \ref{resultados principales}, we state and prove our main results. Finally, Section \ref{4 examples} presents four concrete examples to illustrate our results.

\

\textbf{Notation:} The symbol $A \lesssim B$ stands for the inequality $A \leq cB$ for some constant $c$. We denote by $Q(x_0, r)$ the cube centered at $x_0 \in \mathbb{R}^{n}$ with side lenght $r$. Given $\gamma >0$ and a cube $Q = Q(x_0, r)$, we set $\gamma Q = Q(x_0, \gamma r)$. Denote by $\mathcal{Q}$ the set of all cubes having their edges parallel to the coordinate axes. For a measurable subset 
$E \subset \mathbb{R}^{n}$ we denote $|E|$ and $\chi_E$ the Lebesgue measure of $E$ and the characteristic function of $E$ respectively. 
Given a real number $s \geq 0$, we write $\lfloor s \rfloor$ for the integer part of $s$. As usual we denote with 
$\mathcal{S}(\mathbb{R}^{n})$ the space of smooth and rapidly decreasing functions, with 
$\mathcal{S}'(\mathbb{R}^{n})$  the dual space. If $\beta$ is the multiindex $\beta=(\beta_1, ..., \beta_n)$, then 
$|\beta| = \beta_1 + ... + \beta_n$. Given a function $g$ on $\mathbb{R}^n$ and $t > 0$, we write $g_t(x) = t^{-n} g(t^{-1} x)$. Let $d$ be a non negative integer and $\mathcal{P}_{d}$ the subspace of $L^{1}_{loc}(\mathbb{R}^{n})$ formed by all the polynomials of degree at most $d$. Given a measurable function $h$, the expression $h \perp \mathcal{P}_{d}$ stands for $\int h(x) P(x) dx = 0$ for all $P \in \mathcal{P}_{d}$.

Throughout this paper, $C$ will denote a positive constant, not necessarily the same at each occurrence.

\section{Preliminaries} \label{basico}

\subsection{Ball quasi-Banach function spaces} In the sequel, $\mathfrak{M} = \mathfrak{M}(\mathbb{R}^n)$ is the set of all measurable functions on $\mathbb{R}^n$ and $\mathfrak{M}_{+} = \mathfrak{M}_{+}(\mathbb{R}^n)$ is the cone of all non-negative measurable functions on $\mathbb{R}^n$.

For every $x \in \mathbb{R}^n$ and $r > 0$ fixed, let $B(x,r) := \{ y \in \mathbb{R}^n: |x-y| < r \}$. Now, we define
\[
\mathbb{B} := \{ B(x,r) : x \in \mathbb{R}^n \,\, \text{and} \,\, r > 0 \}.
\]

\begin{definition}
A mapping $\rho : \mathfrak{M}_{+} \to [0, \infty]$ is called a ball quasi-Banach function norm if it satisfy the following properties:

$(P1)$ $\rho(f) = 0$ implies that $f = 0$ a.e.; 

$(P2)$ $\rho(\alpha f) = |\alpha| \rho(f)$ for all $\alpha \in \mathbb{C}$ and all $f \in \mathfrak{M}_{+}$;

$(P3)$ there exists $C \geq 1$ such that $\rho(f+g) \leq C (\rho(f) + \rho(g))$ for all $f, g \in \mathfrak{M}_{+}$;

$(P4)$ if some $f, g \in \mathfrak{M}_{+}$ satisfy $f \leq g$ a.e., then $\rho(f) \leq \rho(g)$;

$(P5)$ if some $f_n, f \in \mathfrak{M}_{+}$ satisfy $f_n \uparrow f$ a.e., then $\rho(f_n) \uparrow \rho(f)$;

$(P6)$ if $B \in \mathbb{B}$, then $\rho(\chi_B) < \infty$.
\end{definition}

\begin{definition}
Let $\rho$ be a ball quasi-Banach function norm. We then define the corresponding ball quasi-Banach function space $X = X(\rho)$ as the set
\[
X = \{ f \in \mathfrak{M} : \rho(|f|) < \infty \}.
\]
For each $f \in X$, define
\[
\Vert f \Vert_X = \rho(|f|).
\]
\end{definition}

\begin{remark} \label{prop X}
Let $(X, \Vert \cdot \Vert_X)$ be a ball quasi-Banach function space, then it is easy to check that

$(i)$ $\Vert f \Vert_X = 0$ implies that $f = 0$ a.e.;

$(ii)$ $\Vert \alpha f \Vert_X = |\alpha| \Vert f \Vert_X$ for all $\alpha \in \mathbb{C}$ and all $f \in X$;

$(iii)$ there exists $C \geq 1$ such that $\Vert f+g \Vert_X \leq C (\Vert f \Vert_X + \Vert g \Vert_X)$ for all $f, g \in X$;

$(iv)$ if $f \in \mathfrak{M}$, $g \in X$ are such that $|f| \leq |g|$ a.e., then $f \in X$ and $\Vert f \Vert_X \leq \Vert g \Vert_X$;

$(v)$ if $0 \leq f_n \uparrow f$ a.e., then either $f \notin X$ and $\Vert f_n \Vert_X \uparrow \infty$, or $f \in X$ and 
$\Vert f_n \Vert_X \uparrow \Vert f \Vert_X$;

$(vi)$ if $B \in \mathbb{B}$, then $\chi_B \in X$. \\
Moreover, from $(P4)$ and $(P6)$ follow that $\chi_Q \in X$, for each cube $Q \subset \mathbb{R}^n$.
\end{remark}

A ball quasi-Banach function space $X$ is called a \textit{ball Banach function space} if the constant $C$, appearing in $(iii)$ of 
Remark \ref{prop X}, is equal to $1$, and for any ball $B \in \mathbb{B}$ there exists a positive constant $C_{(B)}$, depending on $B$, such that
\[
\int_{B} |f(x)| dx \leq C_{(B)} \| f \|_{X},
\]
for all $f \in X$. Thus, every Banach function space is a ball Banach function space (see \cite[Definition 1.3]{Bennett}).

An interesting discussion about the well-definition of (quasi-)Banach function spaces was given by E. Lorist and Z. Nieraeth in \cite{Lorist} (see also \cite{Nekvinda}).

\begin{definition} (See \cite{Bennett})
For any ball Banach function space $X$, the associate space (also called the K\"othe dual) $X'$ is defined by setting
\[
X' := \left\{ f \in \mathfrak{M} : \| f \|_{X'} := \sup \{ \| f g\|_{L^{1}(\mathbb{R}^n)} : g \in X, \| g \|_{X} = 1 \} < \infty \right\}
\]
where $\| \cdot \|_{X'}$ is called the associate norm of $\| \cdot \|_{X}$.
\end{definition}

\begin{lemma} (\cite[Proposition 2.3]{sawa} and \cite[Lemma 2.6]{Zhang}) \label{doble dual}
Let $X$ be a ball Banach function space. Then $X'$ is also a ball Banach function space and $X$ coincides with its second associate space $X''$. In other words, a function $f$ belongs to $X$ if and only if it belongs to $X''$ and, in that case,
\[
\| f \|_{X} = \| f \|_{X''}.
\]
\end{lemma}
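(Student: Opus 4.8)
The plan is to prove the two assertions separately: that $X'$ is a ball Banach function space is a verification of the defining axioms, while the identity $X=X''$ is the Lorentz--Luxemburg duality theorem transplanted to this setting, and I expect the latter to contain the only real difficulty. First I would check that $\rho'(f):=\|f\|_{X'}$, $f\in\mathfrak{M}_+$, is a ball Banach function norm; since $\rho'$ is a supremum of the quantities $\|fg\|_{L^1}$ over the fixed set $\{g\in X:\|g\|_X=1\}$, axioms $(P2)$, $(P3)$ (with constant exactly $1$, so that $\|\cdot\|_{X'}$ is a genuine norm), $(P4)$, and $(P5)$ (Fatou, by the monotone convergence theorem for each fixed admissible $g$ followed by an interchange of suprema) are routine. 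For $(P1)$ one tests against $g=\chi_B/\|\chi_B\|_X$ --- legitimate since $\chi_B\in X$ with $0<\|\chi_B\|_X<\infty$ by Remark~\ref{prop X} --- deducing $\int_B|f|\,dx=0$ for every ball $B$. For $(P6)$ one uses the ball-Banach property of $X$: $\int_B|g|\,dx\le C_{(B)}\|g\|_X$, so $\rho'(\chi_B)\le C_{(B)}<\infty$; and the local-integrability condition that makes $X'$ a ball Banach (not merely quasi-Banach) function space, namely $\int_B|f|\,dx\le\|\chi_B\|_X\,\|f\|_{X'}$, is the special case $g=\chi_B$ of H\"older's inequality $\|fg\|_{L^1}\le\|f\|_{X'}\|g\|_X$, itself immediate from the definition of $\|\cdot\|_{X'}$.

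Since $X'$ is now a ball Banach function space, applying the first part to $X'$ shows that $X''=(X')'$ is again one; in particular $X''\subseteq L^1_{loc}(\mathbb{R}^n)$, because integrals over balls of elements of $X''$ are controlled by $\|\cdot\|_{X''}$. One inclusion is immediate: H\"older's inequality between $X'$ and $X$ gives $\|fg\|_{L^1}\le\|f\|_X$ whenever $\|g\|_{X'}=1$, hence $\|f\|_{X''}\le\|f\|_X$ and $X\subseteq X''$. For the reverse inequality $\|f\|_X\le\|f\|_{X''}$ --- which will also yield $X''\subseteq X$ --- I may assume $\|f\|_{X''}<\infty$, so $f\in L^1_{loc}(\mathbb{R}^n)$, and (only $|f|$ entering) $f\ge0$. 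Fix any $\lambda$ with $0<\lambda<\|f\|_X$. The set $C_\lambda:=\{u\in\mathfrak{M}:\|u\|_X\le\lambda\}$ is convex, does not contain $f$, and --- crucially by the Fatou property $(P5)$ of $X$ --- is closed in $L^1_{loc}(\mathbb{R}^n)$: if $u_k\to u$ there, a diagonal subsequence converges a.e., and $(P5)$ then gives $\|u\|_X\le\liminf_k\|u_k\|_X\le\lambda$.

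Now $L^1_{loc}(\mathbb{R}^n)$, equipped with the topology of $L^1$-convergence on balls, is a locally convex space whose continuous dual consists exactly of the functionals $u\mapsto\int ug\,dx$ with $g\in L^\infty(\mathbb{R}^n)$ supported in a ball, and every such $g$ lies in $X'$. Hence Hahn--Banach separation of $f$ from the closed convex set $C_\lambda$ produces such a $g$ with $\sup_{u\in C_\lambda}\mathrm{Re}\int ug\,dx<\mathrm{Re}\int fg\,dx$; since $C_\lambda$ is the set of all $u$ with $\|u\|_X\le\lambda$, a phase-rotation argument (with H\"older for the upper bound) identifies $\sup_{u\in C_\lambda}\mathrm{Re}\int ug\,dx=\lambda\|g\|_{X'}$, while $\mathrm{Re}\int fg\,dx\le\int f|g|\,dx$; replacing $g$ by $|g|$ and normalizing (necessarily $\|g\|_{X'}>0$) gives $\tilde g\in X'$ with $\|\tilde g\|_{X'}=1$ and $\int f\tilde g\,dx>\lambda$. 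Therefore $\|f\|_{X''}\ge\int f\tilde g\,dx>\lambda$, and letting $\lambda\uparrow\|f\|_X$ yields $\|f\|_{X''}\ge\|f\|_X$, so in particular $f\in X$; together with the easy inclusion this gives $X=X''$ with equal norms. The main obstacle is precisely this reverse inequality, and its heart is the closedness of the unit ball of $X$ in $L^1_{loc}$: that is exactly where axiom $(P5)$ enters, and it is what makes the separation (bipolar) argument recover $\rho$ from $\rho''$ --- without the Fatou property the identity $X=X''$ can genuinely fail. By comparison, the identification of the dual of $L^1_{loc}$ and the phase/normalization bookkeeping in $X'$ are routine. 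One may alternatively follow the arguments of \cite[Proposition~2.3]{sawa} and \cite[Lemma~2.6]{Zhang}.
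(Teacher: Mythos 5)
Your proof is correct, and it is genuinely \emph{more} than the paper offers: the paper states this lemma with no proof at all, simply deferring to \cite[Proposition 2.3]{sawa} and \cite[Lemma 2.6]{Zhang}, whereas you give a self-contained Lorentz--Luxemburg argument. The structure you follow (axiom-checking for $\rho'$, the easy inclusion $\|f\|_{X''}\le\|f\|_X$ via H\"older, and then the reverse inequality via closedness of the ball $C_\lambda$ in $L^1_{loc}$ plus Hahn--Banach separation against compactly supported $L^\infty$ functionals) is essentially the classical Bennett--Sharpley proof adapted to the ball setting, and that is also the spirit of the cited references; the key adaptation you correctly make is replacing ``sets of finite measure'' by balls throughout, which is exactly what the ball-Banach axioms (local integrability $\int_B|f|\le C_{(B)}\|f\|_X$ and $\chi_B\in X$) are designed to support. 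Two small points deserve one more line each: (a) the step ``$(P5)$ then gives $\|u\|_X\le\liminf_k\|u_k\|_X$'' is the Fatou property, which follows from $(P5)$ only after introducing $v_m:=\inf_{k\ge m}|u_k|\uparrow|u|$ and using $(P4)$ to get $\|v_m\|_X\le\inf_{k\ge m}\|u_k\|_X$; and (b) in the identification $\sup_{u\in C_\lambda}\mathrm{Re}\int ug\,dx=\lambda\|g\|_{X'}$ one should note that $C_\lambda$ is invariant under multiplication by unimodular measurable functions because $\|\cdot\|_X$ depends only on $|u|$, and that $\|g\|_{X'}>0$ for the separating $g$ (otherwise testing against $\chi_B/\|\chi_B\|_X$ forces $g=0$ a.e., contradicting strict separation). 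Neither is a gap, just bookkeeping worth recording.
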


\begin{definition}
A ball quasi-Banach function space $X$ is said to have an absolutely continuous quasi-norm if $\| \chi_{E_j} \|_{X} \downarrow 0$ whenever $\{ E_j \}_{j=1}^{\infty}$ is a sequence of measurable sets that satisfies $E_j \supset E_{j+1}$ for all $j \in \mathbb{N}$ and 
$\bigcap_{j=1}^{\infty} E_j = \emptyset$.
\end{definition}

\begin{definition}
Let $X$ be a ball quasi-Banach function space and $p \in (0, \infty)$. The $p$-convexification $X^p$ of $X$ is defined by setting 
$X^p := \{ f \in \mathfrak{M} : |f|^p \in X \}$ equipped with the quasi-norm $\| f \|_{X^p} : =  \| |f|^p \|^{1/p}_{X}$.
\end{definition}

\begin{definition}
Let $X$ be a ball quasi-Banach function space and $p \in (0, \infty)$. The space $X$ is said to be $p$-convex if there exists a positive constant $C$ such that, for any $\{ f_j \}_{j=1}^{\infty} \subset X^{1/p}$,
\[
\left\| \sum_{j=1}^{\infty} |f_j| \right\|_{X^{1/p}} \leq C \sum_{j=1}^{\infty} \left\| f_j \right\|_{X^{1/p}}.
\]
In particular, when $C = 1$, $X$ is said to be strictly $p$-convex.
\end{definition}

\subsection{Maximal functions and additional assumptions} \label{hypotheses} For $0 \leq \alpha < n$, we define the \textit{fractional maximal operator} 
$M_{\alpha}$ by
\[
(M_{\alpha}f)(x) = \sup_{B \ni x} |B|^{\frac{\alpha}{n} - 1}\int_{B} |f(y)| \, dy,
\]
where $f$ is a locally integrable function on $\mathbb{R}^{n}$ and the supremum is taken over all balls $B$ containing $x$. For 
$\alpha=0$, we have that $M_0 = M$, where $M$ is the {\it Hardy-Littlewood maximal operator} on $\mathbb{R}^{n}$.

For $\theta \in (0, \infty)$, the \textit{powered Hardy–Littlewood maximal operator} $M^{(\theta)}$ is defined by
\[
(M^{(\theta)}f)(x) = \left[ M (|f|^{\theta})(x) \right]^{1/ \theta}.
\]
In what follows, we will assume the following two additional hypotheses:

$A1)$ Let $X$ be a ball quasi-Banach function space. Assume that, for some $\theta, s \in (0, 1]$ with $\theta < s$, there exists a positive constant $C$ such that for any sequence of functions $\{ f_j \}_{j=1}^{\infty} \subset L^{1}_{loc}(\mathbb{R}^n)$
\begin{equation} \label{A1}
\left\Vert  \left\{ \sum_{j=1}^{\infty} (M^{(\theta)} f_j)^{s} \right\}^{1/s} \right\Vert_{X} \leq C
\left\Vert \left\{ \sum_{j=1}^{\infty} |f_j|^{s} \right\}^{1/s}  \right\Vert_{X}.
\end{equation}

$A2)$ Let $X$ be a ball quasi-Banach function space. Assume that there exist $r \in (0, 1]$, $q \in (1, \infty)$ and a positive constant $C$ such that $X^{1/r}$ is a ball Banach function space and for any $f \in (X^{1/r})'$
\begin{equation} \label{A2}
\left\Vert  M^{((q/r)')} f \right\Vert_{(X^{1/r})'} \leq C \left\Vert f  \right\Vert_{(X^{1/r})'}.
\end{equation}

\begin{remark} \label{cond equiv A2}
We observe that (\ref{A2}) is equivalent to that the Hardy-Littlewood maximal operator $M$ be bounded on $[(X^{1/r})']^{1/(q/r)'}$.
\end{remark}

Given $L \in \mathbb{Z}_{+}$, let 
\[
\mathcal{F}_{L}=\left\{ \varphi \in \mathcal{S}(\mathbb{R}^{n}):\sum\limits_{\left\vert \mathbf{\beta }\right\vert \leq L}\sup\limits_{x\in \mathbb{R}^{n}}\left( 1+\left\vert x\right\vert \right)^{L}\left\vert \partial^{\mathbf{\beta }}
\varphi(x) \right\vert := \| \varphi \|_{\mathcal{S}(\mathbb{R}^{n}), \, L}  \leq 1\right\}.
\] 
Let $f \in \mathcal{S}'(\mathbb{R}^{n})$, $b \in (0, \infty)$, $\Phi \in \mathcal{S}(\mathbb{R}^n)$ and $\mathbb{R}^{n+1}_{+} := \mathbb{R}^n 
\times (0, \infty)$. We define the following two maximal functions for $f$, 
\begin{equation} \label{grand max cero}
\mathcal{M}^{0}_{L} f(x) :=\sup \left\{ |\left( \phi_t \ast f\right)(x) | : t > 0, \phi \in \mathcal{F}_{L} \right\},
\end{equation}
and
\begin{equation} \label{max b}
M^{**}_{b}(f, \Phi)(x) :=\sup_{(y, t) \in \mathbb{R}^{n+1}_{+}} \frac{ |( \Phi_t \ast f )(x-y)|}{(1 + t^{-1}|y|)^b}.
\end{equation}

\subsection{Hardy spaces associated with ball quasi-Banach function spaces} Let $(X, \Vert \cdot \Vert_X)$ be a ball quasi-Banach function space. Now, following to \cite{sawa}, we introduce the Hardy type space associated with $X$, which is denoted by $H_{X}(\mathbb{R}^n)$, and present the atomic decomposition for elements of $H_{X}(\mathbb{R}^n)$ also established in \cite{sawa}.

\begin{definition} 
Let $X$ be a ball quasi-Banach function space. Then the Hardy space $H_{X}(\mathbb{R}^n)$ associated with $X$ is defined as
\begin{equation}  \label{Hx def}
H_{X}(\mathbb{R}^n) := \left\{ f \in \mathcal{S}'(\mathbb{R}^n) : \| f \|_{H_{X}(\mathbb{R}^n)} := \| M^{**}_{b}(f, \Phi) \|_{X} 
< \infty \right\},
\end{equation}
where $M^{**}_{b}$ is the maximal operator given by (\ref{max b}) with $b$ sufficiently large.
\end{definition}

\begin{theorem} (\cite[Theorem 3.1 - (ii)]{sawa}) \label{equiv grand max} 
Let $X$ be a ball quasi-Banach function space such that the Hardy-Littlewood maximal operator $M$ is bounded on $X^{1/r}$ for some 
$r \in (0, \infty)$, and let $\mathcal{M}^{0}_{L} f$ be the grand maximal of $f$ given by (\ref{grand max cero}). Assume that
$b \in (n/r, \infty)$. Then, when $L \geq \lfloor b+2 \rfloor$, if one of the quantities 
\[
\| M^{**}_{b}(f, \Phi) \|_{X} \,\,\,\, \text{or} \,\,\,\, \| \mathcal{M}^{0}_{L} f \|_{X}
\]
is finite, then the other is also finite and mutually equivalent with the implicit positive constants independent of $f$.
\end{theorem}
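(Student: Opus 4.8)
The plan is to reduce this to the classical equivalence between the tangential (nontangential) maximal function $M^{**}_b$ and the grand maximal function $\mathcal{M}^0_L$, which holds in the scale of Lebesgue spaces, by exploiting that both sides are controlled by $X$-norms of expressions to which the Fefferman--Stein/$\|M\|_{X^{1/r}}$ hypothesis can be applied pointwise. First I would recall the easy direction: since every $\phi \in \mathcal{F}_L$ gives, for fixed $t>0$, $|(\phi_t \ast f)(x)| \le \sup_{(y,s)} |(\phi_s \ast f)(x-y)|(1+s^{-1}|y|)^{-b}$ by taking $y=0$, and since one may expand $\phi$ against a fixed test function $\Phi$ (with $\int \Phi \neq 0$) via a standard reproducing/Calder\'on-type identity, one gets the pointwise bound $\mathcal{M}^0_L f(x) \lesssim M^{**}_b(f,\Phi)(x)$ up to constants depending only on $n, b, L$; hence $\|\mathcal{M}^0_L f\|_X \lesssim \|M^{**}_b(f,\Phi)\|_X$ by $(P4)$. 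This is the direction that does not even use the maximal-operator hypothesis on $X^{1/r}$.

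For the reverse direction, the core is the pointwise estimate, valid for $b \in (n/r,\infty)$ and $L \ge \lfloor b+2\rfloor$,
\begin{equation} \label{pointwise-key}
M^{**}_b(f,\Phi)(x) \lesssim \left[ M\big( (\mathcal{M}^0_L f)^r \big)(x) \right]^{1/r},
\end{equation}
which is exactly the classical Fefferman--Stein--type inequality (as in \cite{Stein}, Chapter III, or the Str\"omberg--Torchinsky argument) whose proof is purely pointwise: one writes $\Phi_t \ast f = \sum_k (\Phi_t \ast \psi_{2^{-k}t}) \ast f$ for a suitable partition-of-unity family $\{\psi_j\}$, estimates each $\Phi_t \ast \psi_{2^{-k}t}$ in $\mathcal{F}_L$ up to a rapidly decaying constant $2^{-kM}$ (here the number of derivatives $L \ge \lfloor b+2\rfloor$ is used so that the decay beats the tangential weight $(1+t^{-1}|y|)^b$), and then sums the resulting geometric-type series after inserting $|(\psi_{2^{-k}t} \ast f)(z)| \le \mathcal{M}^0_L f(z)$ and dominating the shifted-average by the Hardy--Littlewood maximal function at exponent $r$. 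Granting \eqref{pointwise-key}, I would apply $(P4)$ followed directly by the hypothesis that $M$ is bounded on $X^{1/r}$: writing $g = (\mathcal{M}^0_L f)^r$, we have $\|M^{**}_b(f,\Phi)\|_X = \| (M g)^{1/r}\|_X = \|Mg\|_{X^{1/r}}^{1/r} \lesssim \|g\|_{X^{1/r}}^{1/r} = \|\mathcal{M}^0_L f\|_X$, which closes the loop.

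The main obstacle is making \eqref{pointwise-key} genuinely rigorous when $f$ is only a priori a tempered distribution, since one must know that $\Phi_t \ast f$ and $\phi_s \ast f$ are honest functions and that the reproducing identity converges in the right sense; the standard remedy is to first prove the equivalence for the truncated/auxiliary maximal functions (restricting $t$ to a compact subinterval and adding a decay factor), obtain uniform bounds, and then pass to the limit by monotone convergence using $(P5)$, exactly as in \cite{sawa}. A secondary technical point is the precise threshold $L \ge \lfloor b+2 \rfloor$: it must be tracked through the estimate of $\|\Phi_t \ast \psi_{2^{-k}t}\|_{\mathcal{S}(\mathbb{R}^n),L}$ so that the moment/smoothness of the bump functions produces enough decay in $k$ to absorb the weight $(1+t^{-1}|y|)^b$ and still leave a summable series; this is routine but is where the constant genuinely depends on $n$, $b$, $L$, and $r$. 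Since \cite[Theorem 3.1]{sawa} is being invoked, the cleanest write-up simply verifies that the hypotheses there ($M$ bounded on $X^{1/r}$, $b>n/r$, $L \ge \lfloor b+2\rfloor$) coincide with ours and cites the two pointwise inequalities above, and I would present it that way rather than reproducing the Str\"omberg--Torchinsky computation in full.
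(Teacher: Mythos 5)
The paper does not prove this statement at all: it is imported verbatim as a citation of \cite[Theorem 3.1(ii)]{sawa}, so your closing remark --- that the cleanest write-up simply checks the hypotheses and cites that theorem --- is exactly what the author does. Your sketch of the underlying argument (the easy pointwise bound $\mathcal{M}^{0}_{L}f \lesssim M^{**}_{b}(f,\Phi)$ via a Calder\'on-type reproducing identity, the reverse pointwise bound $M^{**}_{b}(f,\Phi) \lesssim [M((\mathcal{M}^{0}_{L}f)^{r})]^{1/r}$ for $b>n/r$ followed by the boundedness of $M$ on $X^{1/r}$, and the truncation/monotone-convergence step via $(P5)$ to make this legitimate for a general tempered distribution) is a faithful outline of the standard Fefferman--Stein/Str\"omberg--Torchinsky proof that \cite{sawa} adapts, so there is nothing to correct.
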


\begin{remark} \label{Hx space}
If $X$ and $r \in (0, \infty)$ are as in Theorem \ref{equiv grand max} and $b = n/r + 1$, then for any $L \geq \lfloor  n/r + 3 \rfloor$, we have
\[
H_{X}(\mathbb{R}^n) = \left\{ f \in \mathcal{S}'(\mathbb{R}^n) : \| \mathcal{M}^{0}_{L} f \|_{X} < \infty \right\}.
\]
Fixed  $L \geq \lfloor  n/r + 3 \rfloor$, we consider $\| f \|_{H_{X}(\mathbb{R}^n)} = \| \mathcal{M}^{0}_{L} f \|_{X}$.
\end{remark}

Before establishing the atomic decomposition for elements of $H_{X}(\mathbb{R}^n)$, we recall the definition of atoms.

\begin{definition}
Let $X$ be a ball quasi-Banach function space satisfying (\ref{A1}) and let $p \in [1, \infty]$. Assume that $d \in \mathbb{Z}_{+}$ satisfies 
$d \geq d_X$, where $d_X := \lfloor n (1/\theta - 1) \rfloor$ and $\theta \in (0, 1]$ is the constant in (\ref{A1}). Then the function 
$a(\cdot)$ is called an $(X, p, d)$-atom if there exists a cube $Q \in \mathcal{Q}$ such that $\supp(a) \subset Q$,
\[
\| a \|_{L^p(\mathbb{R}^n)} \leq \frac{|Q|^{1/p}}{\| \chi_Q \|_{X}},
\]
and $a(\cdot) \perp \mathcal{P}_d$.
\end{definition}

\begin{remark} \label{infinite atom}
For $p \geq 1$ fixed, every $(X, \infty, d)$-atom is an $(X, p, d)$-atom.
\end{remark}

To get our main results we need the following atomic decomposition for elements of $H_{X}(\mathbb{R}^n) \cap L^p(\mathbb{R}^n)$.

\begin{theorem} \label{X atomic decomp}
Let $X$ be a ball quasi-Banach function space satisfying (\ref{A1}) for some $\theta, s \in (0,1]$, $d \geq d_X$ be a fixed integer, and
$f \in H_{X}(\mathbb{R}^n) \cap L^p(\mathbb{R}^n)$, with $p \in (1, \infty)$ fixed. Then there exist a sequence $\{ a_j \}_{j=1}^{\infty}$ 
of $(X, \infty, d)$-atoms, supported on the cubes $\{ Q_j \}_{j=1}^{\infty} \subset \mathcal{Q}$, 
and a sequence $\{ \lambda_j \}_{j=1}^{\infty} \subset (0, \infty)$ such that
\begin{equation} \label{converg Lp}
f = \sum_{j=1}^{\infty} \lambda_j a_j \,\,\,\,\,\,\,\, \text{in} \,\,\,\, L^p(\mathbb{R}^n),
\end{equation}
and
\begin{equation} \label{atomic norm}
\left\| \left\{ \sum_{j=1}^{\infty} \left( \frac{\lambda_j}{\| \chi_{Q_j} \|_X} \right)^s \chi_{Q_j} \right\}^{1/s} \right\|_{X} \lesssim_{s}  \| f \|_{H_{X}(\mathbb{R}^n)},
\end{equation}
where the implicit positive constant is independent of $f$, but depends on $s$.
\end{theorem}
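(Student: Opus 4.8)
The plan is to run the Fefferman--Stein (Calder\'on--Zygmund) decomposition of $H_X(\mathbb{R}^n)$ exactly as in \cite[Section 3]{sawa}, and to exploit the additional hypothesis $f\in L^p(\mathbb{R}^n)$, $p>1$, in order both to guarantee that the resulting atoms may be taken to be $(X,\infty,d)$-atoms and to promote the convergence of the atomic series from $\mathcal{S}'(\mathbb{R}^n)$ to $L^p(\mathbb{R}^n)$. First I would note that applying (\ref{A1}) to a one-term sequence shows that $M$ is bounded on $X^{1/\theta}$, so Theorem \ref{equiv grand max} and Remark \ref{Hx space} apply with $r=\theta$ and give $\|f\|_{H_X(\mathbb{R}^n)}\approx\|\mathcal{M}^{0}_{L}f\|_X$ for $L\geq\lfloor n/\theta+3\rfloor$, which I fix. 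Since $L\geq n+1$, one has the pointwise bound $\mathcal{M}^{0}_{L}f\lesssim Mf$, whence $\mathcal{M}^{0}_{L}f\in L^p(\mathbb{R}^n)$ because $p>1$. For $k\in\mathbb{Z}$ set $\Omega_k:=\{x:\mathcal{M}^{0}_{L}f(x)>2^k\}$, an open set of finite measure, decreasing in $k$, with $\bigcap_k\Omega_k$ null and $\Omega_{-K}^{\,c}\downarrow\{\mathcal{M}^{0}_{L}f=0\}\subseteq\{f=0\}$ (a.e.) as $K\to\infty$. To each $\Omega_k$ I associate a Whitney decomposition into dyadic cubes $\{Q_{k,i}\}_i$ whose $\tfrac65$-dilates $Q^{*}_{k,i}:=\tfrac65 Q_{k,i}$ still lie in $\Omega_k$ and have bounded overlap, together with a subordinate partition of unity $\{\zeta_{k,i}\}_i$ with $\sum_i\zeta_{k,i}=\chi_{\Omega_k}$.

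Next, following \cite[Section 3]{sawa} (cf. \cite[Ch. III]{Stein}), I would form at each level the decomposition $f=g_k+\sum_i b_{k,i}$ with $b_{k,i}=(f-P_{k,i})\zeta_{k,i}$, where $P_{k,i}\in\mathcal{P}_d$ is chosen so that $b_{k,i}\perp\mathcal{P}_d$; the grand-maximal bound on $\Omega_k^{\,c}$ yields $\|b_{k,i}\|_{L^\infty}\lesssim 2^k$, $\|g_k\|_{L^\infty}\lesssim 2^k$, and $\supp b_{k,i}\subset Q^{*}_{k,i}$. The standard telescoping of consecutive levels then produces $g_{k+1}-g_k=\sum_i h_{k,i}$, with each $h_{k,i}$ supported in $Q^{*}_{k,i}$, satisfying $h_{k,i}\perp\mathcal{P}_d$ and $\|h_{k,i}\|_{L^\infty}\lesssim 2^k$. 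Setting $\lambda_{k,i}:=C2^k\|\chi_{Q^{*}_{k,i}}\|_X$, with $C$ the constant in the last bound, and $a_{k,i}:=\lambda_{k,i}^{-1}h_{k,i}$, each $a_{k,i}$ is then an $(X,\infty,d)$-atom supported on $Q^{*}_{k,i}$; relabel $\{(k,i)\}$ as $\{j\}$, with cubes $Q_j$.

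It then remains to verify the two conclusions. For (\ref{converg Lp}), since $\sum_{-K\leq k\leq K}(g_{k+1}-g_k)=g_{K+1}-g_{-K}$, it suffices to show $g_{K+1}\to f$ and $g_{-K}\to 0$ in $L^p(\mathbb{R}^n)$ as $K\to\infty$; both follow from dominated convergence, since $\|f-g_{K+1}\|_p^p=\|\sum_i b_{K+1,i}\|_p^p\lesssim 2^{(K+1)p}|\Omega_{K+1}|\leq\int_{\Omega_{K+1}}(\mathcal{M}^{0}_{L}f)^p\to0$, while $\|g_{-K}\|_p^p\lesssim\int_{\Omega_{-K}^{\,c}}|f|^p+2^{-Kp}|\Omega_{-K}|\to0$ (the first term because $\Omega_{-K}^{\,c}$ decreases to a subset of $\{f=0\}$ a.e.; the second because $\lambda^p|\{\mathcal{M}^{0}_{L}f>\lambda\}|=\int\lambda^p\chi_{\{\mathcal{M}^{0}_{L}f>\lambda\}}\to0$ as $\lambda\to0^+$, again by dominated convergence). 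For (\ref{atomic norm}), by construction $\lambda_j/\|\chi_{Q_j}\|_X=C2^k$ whenever $Q_j=Q^{*}_{k,i}$, so the bounded overlap of $\{Q^{*}_{k,i}\}_i$ gives $\sum_i\chi_{Q^{*}_{k,i}}\lesssim\chi_{\Omega_k}$, and hence, pointwise a.e.,
\[
\sum_{j}\left(\frac{\lambda_j}{\|\chi_{Q_j}\|_X}\right)^{s}\chi_{Q_j}(x)\lesssim\sum_{k\in\mathbb{Z}}2^{ks}\chi_{\Omega_k}(x)=\sum_{k\,:\,2^k<\mathcal{M}^{0}_{L}f(x)}2^{ks}\approx\big(\mathcal{M}^{0}_{L}f(x)\big)^{s};
\]
raising to the power $1/s$, taking $\|\cdot\|_X$, and invoking the monotonicity in Remark \ref{prop X}(iv) together with $\|\mathcal{M}^{0}_{L}f\|_X\approx\|f\|_{H_X(\mathbb{R}^n)}$ yields (\ref{atomic norm}), with implicit constant depending on $s$ (through the geometric series).

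The main obstacle I expect is the $L^p$-convergence part: establishing $g_{-K}\to0$ in $L^p$ as $K\to\infty$, which is precisely where the global integrability of $f$ (and of $\mathcal{M}^{0}_{L}f$) is genuinely used, and checking that the single-index reordering is harmless, i.e. that summing the atoms in the order dictated by the telescoping of the $g_k$'s still reproduces $f$ in $L^p$. The rest is essentially the classical Fefferman--Stein construction of the pieces $h_{k,i}$ enjoying the three required properties simultaneously, plus the (essentially formal) coefficient estimate above.
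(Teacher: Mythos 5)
Your argument is essentially the paper's own: the paper simply cites \cite[Theorem 3.7 and Proposition 4.3]{sawa} for the construction and the estimate (\ref{atomic norm}), and upgrades the convergence to $L^p(\mathbb{R}^n)$ exactly as you do, by observing that $f\in L^p$ forces $\mathcal{M}^{0}_{L}f\in L^p$ and then following \cite[Theorem 3.1]{Rocha}. The one inaccuracy is the claim $\|b_{k,i}\|_{L^\infty}\lesssim 2^k$ (hence the step $\|\sum_i b_{K+1,i}\|_p^p\lesssim 2^{(K+1)p}|\Omega_{K+1}|$), which fails since $f$ need not be bounded on $\Omega_k$; but the bounds actually needed, namely $\|g_k\|_{L^\infty}\lesssim 2^k$, $\|h_{k,i}\|_{L^\infty}\lesssim 2^k$, and $|f-g_{K+1}|\leq|f|+|g_{K+1}|\lesssim\mathcal{M}^{0}_{L}f$ a.e.\ on $\Omega_{K+1}$, all hold and yield the same conclusion $\|f-g_{K+1}\|_p^p\lesssim\int_{\Omega_{K+1}}(\mathcal{M}^{0}_{L}f)^p\to 0$, so the proof is correct.
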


\begin{proof}
The existence of a such atomic decomposition as well as the validity of inequality (\ref{atomic norm}) are guaranteed by 
\cite[Theorem 3.7]{sawa}. In principle, the convergence in (\ref{converg Lp}) is in $\mathcal{S}'(\mathbb{R}^n)$. To see the 
convergence of the atomic series to $f$ in $L^{p}(\mathbb{R}^{n})$, we point out that the construction of a such atomic decomposition 
(see \cite[Proposition 4.3]{sawa}) is analogous to the one given for classical Hardy spaces (see \cite[Chapter III]{Stein}). Since 
$f \in L^{p}(\mathbb{R}^{n})$ we have that $\mathcal{M}_{L} f \in L^{p}(\mathbb{R}^{n})$. So, following the proof of 
\cite[Theorem 3.1]{Rocha}, we obtain (\ref{converg Lp}).
\end{proof}

From \cite[Corollary 3.11]{sawa} and since \cite[Remark 3.12]{sawa} also holds true for any $p \in (1, \infty)$, we have the following result.

\begin{proposition} \label{dense set}
Assume that $X$ is ball quasi-Banach function space satisfying (\ref{A1}), (\ref{A2}), which is strictly $s$-convex, 
where $s \in (0, 1]$ is as in (\ref{A1}), and $X$ has an absolutely continuous quasi-norm. Then, for any $p \in (1, \infty]$, 
$H_{X}(\mathbb{R}^n) \cap L^p(\mathbb{R}^n)$ is dense in $H_{X}(\mathbb{R}^n)$.
\end{proposition}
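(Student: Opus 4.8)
The statement is, up to the choice of $p$, \cite[Corollary 3.11]{sawa} together with \cite[Remark 3.12]{sawa}, so the plan is to recall the argument behind those results and check that it is insensitive to $p\in(1,\infty]$. Given $f\in H_{X}(\mathbb{R}^{n})$, I would first invoke the atomic decomposition of \cite[Theorem 3.7]{sawa} (the one underlying Theorem \ref{X atomic decomp}) to write $f=\sum_{j=1}^{\infty}\lambda_{j}a_{j}$ in $\mathcal{S}'(\mathbb{R}^{n})$, with $(X,\infty,d)$-atoms $a_{j}$ supported on cubes $Q_{j}\in\mathcal{Q}$ ($d\geq d_{X}$) and with the normalizing sequence obeying (\ref{atomic norm}). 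Then I would set $f_{N}:=\sum_{j=1}^{N}\lambda_{j}a_{j}$; since each $(X,\infty,d)$-atom is bounded with compact support, $f_{N}\in L^{q}(\mathbb{R}^{n})$ for \emph{every} $q\in[1,\infty]$, and applying (\ref{A1}) to a single function gives $\|M^{(\theta)}\chi_{Q}\|_{X}\lesssim\|\chi_{Q}\|_{X}<\infty$, which together with the single-atom maximal bound below shows $f_{N}\in H_{X}(\mathbb{R}^{n})$. Hence $f_{N}\in H_{X}(\mathbb{R}^{n})\cap L^{p}(\mathbb{R}^{n})$ for the prescribed $p$, and everything reduces to proving $\|f-f_{N}\|_{H_{X}(\mathbb{R}^{n})}\to0$.

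For the key estimate I would use that $d\geq d_{X}=\lfloor n(1/\theta-1)\rfloor$ forces $(n+d+1)/n>1/\theta$, so that the size, support and moment conditions on an atom $a$ on a cube $Q$, via a Taylor expansion of the test functions in $\mathcal{F}_{L}$, yield
\[
\mathcal{M}^{0}_{L}a(x)\;\lesssim\;\|\chi_{Q}\|_{X}^{-1}\big[M\chi_{Q}(x)\big]^{(n+d+1)/n}\;\leq\;\|\chi_{Q}\|_{X}^{-1}\,M^{(\theta)}\chi_{Q}(x),
\]
where the last step uses $0\leq M\chi_{Q}\leq1$ and $M^{(\theta)}\chi_{Q}=[M\chi_{Q}]^{1/\theta}$. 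Since $f-f_{N}=\sum_{j>N}\lambda_{j}a_{j}$ in $\mathcal{S}'(\mathbb{R}^{n})$ and $\mathcal{M}^{0}_{L}$ is subadditive, for $s\in(0,1]$ this gives the pointwise bound $\mathcal{M}^{0}_{L}(f-f_{N})\leq\big\{\sum_{j>N}\big(\lambda_{j}\|\chi_{Q_{j}}\|_{X}^{-1}M^{(\theta)}\chi_{Q_{j}}\big)^{s}\big\}^{1/s}$; taking the $X$-quasi-norm and applying the Fefferman--Stein inequality (\ref{A1}) (with $\theta<s$) then produces
\[
\|f-f_{N}\|_{H_{X}(\mathbb{R}^{n})}\;\lesssim\;\left\|\left\{\sum_{j>N}\left(\frac{\lambda_{j}}{\|\chi_{Q_{j}}\|_{X}}\right)^{s}\chi_{Q_{j}}\right\}^{1/s}\right\|_{X},
\]
the strict $s$-convexity of $X$ being what lets one carry the infinitely many atoms through these steps with uniform constants, exactly as in \cite{sawa}.

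It remains to send $N\to\infty$ on the right. By (\ref{atomic norm}) the function $h:=\{\sum_{j\geq1}(\lambda_{j}\|\chi_{Q_{j}}\|_{X}^{-1})^{s}\chi_{Q_{j}}\}^{1/s}$ belongs to $X$, hence is finite a.e.; therefore the tails $g_{N}:=\{\sum_{j>N}(\lambda_{j}\|\chi_{Q_{j}}\|_{X}^{-1})^{s}\chi_{Q_{j}}\}^{1/s}$ decrease pointwise a.e.\ to $0$ and are dominated by $h\in X$. The absolute continuity of the quasi-norm of $X$ then gives $\|g_{N}\|_{X}\to0$ --- this is where one passes, in the usual way, to the super-level sets $\{g_{N}>\varepsilon\}$, which form a decreasing sequence of sets whose intersection is null --- and this is precisely $\|f-f_{N}\|_{H_{X}(\mathbb{R}^{n})}\to0$.

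The main obstacle is this last step: turning absolute continuity of the quasi-norm (formulated here only for characteristic functions of decreasing null sequences) into the dominated-convergence statement for the $g_{N}$, while keeping the quasi-triangle constants under control across the infinite atomic sum. This is the delicate point of \cite[Corollary 3.11]{sawa}; the contribution here is only to observe that that argument, supplemented by \cite[Remark 3.12]{sawa}, applies unchanged to every $p\in(1,\infty)$ (the case $p=\infty$ already being included), which yields the claimed density for all $p\in(1,\infty]$.
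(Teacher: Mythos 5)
Your proposal is correct and follows essentially the same route as the paper: the paper's entire proof is the observation that the statement is \cite[Corollary 3.11]{sawa} combined with \cite[Remark 3.12]{sawa} extended to all $p\in(1,\infty)$, and your argument is a faithful reconstruction of what lies behind those citations (truncate the atomic series, bound $\mathcal{M}^{0}_{L}$ of a single atom by $\|\chi_{Q}\|_{X}^{-1}M^{(\theta)}\chi_{Q}$ using $d\geq d_{X}$, apply (\ref{A1}), and kill the tail via absolute continuity of the quasi-norm). The only point you leave at the level of a sketch --- upgrading absolute continuity on characteristic functions to the dominated-convergence statement for the tails $g_{N}$ --- is exactly the part the paper also delegates to \cite{sawa}, so there is no substantive divergence.
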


\section{Auxiliary results} \label{auxiliar}

Our first two results equate the size of certain cubes in terms of the quasi-norm $\| \cdot \|_{X}$.

\begin{lemma} (\cite[Lemma 2.3]{Yan}) 
Assume that $X$ is a ball quasi-Banach function space satisfying (\ref{A1}) for some $0 < \theta < s \leq 1$. If $\gamma \in [1/\theta, \infty)$, then for any cubes $Q_1 \subset Q_2$ it holds true that
\[
\| \chi_{Q_2} \|_{X} \leq \left( \frac{|Q_2|}{|Q_1|} \right)^{\gamma}  \| \chi_{Q_1} \|_{X}.
\]
\end{lemma}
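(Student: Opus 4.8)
The plan is to apply the Fefferman--Stein inequality (\ref{A1}) to the degenerate sequence consisting of the single function $f_1 = \chi_{Q_1}$ together with $f_j \equiv 0$ for $j \geq 2$. With this choice both sides of (\ref{A1}) collapse to a scalar estimate, namely $\| M^{(\theta)} \chi_{Q_1} \|_{X} \lesssim \| \chi_{Q_1} \|_{X}$, since $|\chi_{Q_1}|^{\theta} = \chi_{Q_1}$ gives $M^{(\theta)}\chi_{Q_1} = \big[ M(\chi_{Q_1}) \big]^{1/\theta}$. So the whole argument reduces to dominating $\chi_{Q_2}$ pointwise by a suitable multiple of $M^{(\theta)}\chi_{Q_1}$ and then invoking the monotonicity and homogeneity of $\|\cdot\|_{X}$ (properties $(P4)$ and $(P2)$).

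The first substantive step is the pointwise lower bound
\[
M^{(\theta)} \chi_{Q_1}(x) \;\gtrsim\; \left( \frac{|Q_1|}{|Q_2|} \right)^{1/\theta} \qquad \text{for every } x \in Q_2 .
\]
To prove it I would fix $x \in Q_2$ and exhibit one ball realizing the bound: taking $B = B\big(c_{Q_1}, \sqrt{n}\,\ell(Q_2)\big)$, where $c_{Q_1}$ is the center of $Q_1$ and $\ell(Q_2)$ is the sidelength of $Q_2$, one checks (using $c_{Q_1}, x \in Q_2$ and $Q_1 \subset Q_2$, whence all relevant distances are at most $\operatorname{diam}(Q_2) = \sqrt{n}\,\ell(Q_2)$) that $x \in B$ and $Q_1 \subset B$, so
\[
M(\chi_{Q_1})(x) \;\geq\; \frac{1}{|B|}\int_{B} \chi_{Q_1} \;=\; \frac{|Q_1|}{|B|} \;=\; \frac{1}{\omega_n n^{n/2}}\,\frac{|Q_1|}{|Q_2|},
\]
and raising to the power $1/\theta$ gives the claim. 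Combining this with the fact that $\chi_{Q_2}$ vanishes off $Q_2$, we get the pointwise domination $\chi_{Q_2} \lesssim (|Q_2|/|Q_1|)^{1/\theta}\, M^{(\theta)}\chi_{Q_1}$ on all of $\mathbb{R}^n$. Then, by $(P4)$, $(P2)$ and the scalar form of (\ref{A1}),
\[
\| \chi_{Q_2} \|_{X} \;\lesssim\; \left( \frac{|Q_2|}{|Q_1|} \right)^{1/\theta} \| M^{(\theta)}\chi_{Q_1} \|_{X} \;\lesssim\; \left( \frac{|Q_2|}{|Q_1|} \right)^{1/\theta} \| \chi_{Q_1} \|_{X} \;\leq\; \left( \frac{|Q_2|}{|Q_1|} \right)^{\gamma} \| \chi_{Q_1} \|_{X},
\]
the last step using $|Q_2| \geq |Q_1|$ and $\gamma \geq 1/\theta$.

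The part requiring the most care is the bookkeeping of constants, since the statement asks for the clean inequality with constant exactly $1$ rather than an implicit dimensional/Fefferman--Stein factor. I expect to handle this by running the pointwise step with the uncentered \emph{cube} maximal operator instead of the ball one: for $x \in Q_2$ the smallest cube $R$ containing $Q_1 \cup \{x\}$ satisfies $R \supset Q_1$ and $|R| \leq |Q_2|$ (its side-projections are contained in those of $Q_2$), so the comparison $M(\chi_{Q_1})(x) \geq |Q_1|/|Q_2|$ costs nothing; the remaining constant $C$ from (\ref{A1}) is then absorbed into the slack exponent $\gamma - 1/\theta$ for large ratios $|Q_2|/|Q_1|$, with the bounded range of ratios treated directly. (Alternatively one simply invokes \cite[Lemma 2.3]{Yan} as cited.)
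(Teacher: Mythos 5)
The paper offers no proof of this lemma at all---it is quoted directly from \cite[Lemma 2.3]{Yan}---so the only thing to assess is your argument on its own merits. Its core is the standard one and is correct as far as it goes: specializing (\ref{A1}) to the single function $\chi_{Q_1}$ gives the scalar bound $\| M^{(\theta)}\chi_{Q_1}\|_{X}\leq C\|\chi_{Q_1}\|_{X}$; the pointwise lower bound $M^{(\theta)}\chi_{Q_1}\gtrsim (|Q_1|/|Q_2|)^{1/\theta}$ on $Q_2$, obtained by testing against one ball (or cube) containing both $Q_1$ and the point, is fine; and monotonicity of $\|\cdot\|_{X}$ together with $|Q_2|/|Q_1|\geq 1$ and $\gamma\geq 1/\theta$ then yields
\[
\|\chi_{Q_2}\|_{X}\;\leq\; C\left(\frac{|Q_2|}{|Q_1|}\right)^{\gamma}\|\chi_{Q_1}\|_{X},
\]
with $C$ depending on $n$, $\theta$ and the constant in (\ref{A1}).

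The gap is in your last paragraph, where you try to promote this to the literal statement with constant exactly $1$. Switching to the uncentered cube maximal operator only removes the geometric factor $\omega_n n^{n/2}$; the constant $C$ from (\ref{A1}) itself survives (and since (\ref{A1}) is formulated for the ball-based operator $M^{(\theta)}$, comparing cube and ball averages reintroduces a dimensional constant in any case). Absorbing $C$ into the slack exponent $\gamma-1/\theta$ fails on two counts: the lemma explicitly allows $\gamma=1/\theta$, where there is no slack, and even for $\gamma>1/\theta$ the factor $(|Q_2|/|Q_1|)^{\gamma-1/\theta}$ tends to $1$ as $|Q_2|/|Q_1|\to 1$, so it cannot dominate a fixed $C>1$. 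The claim that ``the bounded range of ratios is treated directly'' is not substantiated, and I do not see how to carry it out: nothing in the hypotheses forces $\|\chi_{Q_2}\|_{X}/\|\chi_{Q_1}\|_{X}\to 1$ as $|Q_2|/|Q_1|\to 1$. In fairness, everything the paper actually does with this lemma (Corollary \ref{cubes estim} and the $\lesssim$ chains in the proof of Theorem \ref{main thm}) only requires the inequality up to a multiplicative constant, and the constant-free form as printed is very likely an overstatement of the cited result; but as a proof of the statement as written, your argument establishes it only with an unremoved constant.
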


From this Lemma and Remark \ref{prop X}, it follows the following corollary.

\begin{corollary} \label{cubes estim}
If $X$ is a ball quasi-Banach function space satisfying (\ref{A1}) for some $0 < \theta < s \leq 1$, then for $\gamma \in [1/\theta, \infty)$, $\delta \geq 1$, and any cube $Q$
\[
\| \chi_{Q} \|_{X} \leq \| \chi_{\delta Q} \|_{X} \leq \delta^{\gamma n} \| \chi_{Q} \|_{X}.
\]
\end{corollary}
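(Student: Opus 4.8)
The plan is to derive Corollary \ref{cubes estim} directly from the preceding lemma together with the monotonicity property $(iv)$ of Remark \ref{prop X}. Fix a cube $Q$, a scalar $\delta \geq 1$, and an exponent $\gamma \in [1/\theta, \infty)$. First I would establish the left-hand inequality $\| \chi_Q \|_X \leq \| \chi_{\delta Q} \|_X$: since $\delta \geq 1$ we have $Q \subset \delta Q$, hence $\chi_Q \leq \chi_{\delta Q}$ pointwise, and property $(iv)$ of Remark \ref{prop X} (or directly $(P4)$ of the ball quasi-Banach function norm) gives the claim. Note that when $\delta = 1$ both inequalities are trivial equalities, so we may assume $\delta > 1$.

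For the right-hand inequality, I would apply the preceding lemma with the nested pair of cubes $Q_1 := Q$ and $Q_2 := \delta Q$. These are concentric cubes with side lengths in ratio $\delta$, so $|Q_2| = |\delta Q| = \delta^n |Q|$, and therefore $|Q_2|/|Q_1| = \delta^n$. The lemma, valid because $\gamma \in [1/\theta, \infty)$, then yields
\[
\| \chi_{\delta Q} \|_X \leq \left( \frac{|\delta Q|}{|Q|} \right)^{\gamma} \| \chi_Q \|_X = (\delta^n)^{\gamma} \| \chi_Q \|_X = \delta^{\gamma n} \| \chi_Q \|_X,
\]
which is exactly the asserted bound. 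Chaining the two inequalities completes the proof.

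There is essentially no obstacle here: the corollary is a routine specialization of the lemma to concentric cubes, the only points worth checking being that $\delta Q$ in the paper's notation indeed denotes the concentric dilate (stated in the Notation paragraph) so that the volume ratio is $\delta^n$, and that the hypothesis $\gamma \geq 1/\theta$ needed by the lemma is carried over verbatim. One could optionally remark that the same argument works with cubes replaced by balls, but that is not needed for the statement as given.
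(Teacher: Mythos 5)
Your proposal is correct and matches the paper's intended argument exactly: the paper derives the corollary "from this Lemma and Remark \ref{prop X}," i.e., monotonicity $(P4)$/$(iv)$ for the left inequality and the lemma applied to $Q_1 = Q \subset Q_2 = \delta Q$ with $|Q_2|/|Q_1| = \delta^n$ for the right one. Nothing is missing.
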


The following two lemmas generalize the vector-valued inequalities established in Lemmas 4.9 and 4.11 of \cite{Uribe} to the ball quasi-Banach functions spaces setting. 

\begin{lemma} (\cite[Lemma 2.8 and Remark 2.9]{Chen}) \label{q estimate}
Let $X$ be a ball quasi-Banach function space. Let $r \in (0, 1]$ such that $X^{1/r}$ is a ball Banach function space and $X$ satisfies 
(\ref{A2}) for some $q \geq 1$, then for any countable collection of cubes $\{ Q_j \}$ and non-negative functions $g_j$ such that 
$\supp(g_j) \subset Q_j$ and $\sum_{j=1}^{\infty} \left( \frac{1}{|Q_j|}\int_{Q_j} g_j^{q} \right)^{1/q} \chi_{Q_j} \in X$,
\begin{equation} \label{vector ineq}
\left\Vert  \sum_{j=1}^{\infty}  g_j \right\Vert_{X} \lesssim 
\left\Vert \sum_{j=1}^{\infty} \left( \frac{1}{|Q_j|}\int_{Q_j} g_j^{q} \right)^{1/q} \chi_{Q_j}  \right\Vert_{X}.
\end{equation}
\end{lemma}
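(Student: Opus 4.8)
The plan is to reduce the claimed vector-valued inequality (\ref{vector ineq}) to the boundedness of the Hardy--Littlewood maximal operator on the Köthe dual of a convexification of $X$, by duality. First I would note that, since $X^{1/r}$ is a ball Banach function space, Lemma~\ref{doble dual} gives $\|h\|_{X^{1/r}} = \|h\|_{(X^{1/r})''}$, so it suffices to pair $\left(\sum_j g_j\right)^{1/r}$ (or rather a suitable power) against an arbitrary non-negative $h \in (X^{1/r})'$ with $\|h\|_{(X^{1/r})'} \le 1$ and estimate $\int_{\mathbb{R}^n} \left(\sum_j g_j\right)^{1/r} h$. Actually, it is cleaner to observe that raising both sides of (\ref{vector ineq}) to the power $1/r$ and using the definition of $\|\cdot\|_{X^{1/r}}$ reduces matters to estimating $\left\| \left(\sum_j g_j\right)^{1/r}\right\|_{X^{1/r}}$ from above; but one must be slightly careful because $(a+b)^{1/r}$ is not subadditive in general. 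Since we may assume without loss of generality (by a standard limiting argument using $(P5)$ of the ball quasi-Banach function norm and Remark~\ref{prop X}(v)) that only finitely many $g_j$ are nonzero, and the supports $Q_j$ overlap with bounded multiplicity only after a Calderón--Zygmund-type splitting — no, that is not given — the right move is pure duality on $X$ itself is not available since $X$ need not be Banach. So I would work entirely inside $X^{1/r}$, which \emph{is} a ball Banach function space.

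The key computation: fix $h \in (X^{1/r})'$ with $h \ge 0$, $\|h\|_{(X^{1/r})'} = 1$. For each $j$, since $\supp(g_j) \subset Q_j$, Hölder's inequality with exponents $q$ and $q'$ gives
\begin{equation} \label{pp-holder}
\int_{Q_j} g_j \, h \le \left( \frac{1}{|Q_j|} \int_{Q_j} g_j^{q} \right)^{1/q} |Q_j|^{1/q} \left( \int_{Q_j} h^{q'} \right)^{1/q'} = \left( \frac{1}{|Q_j|} \int_{Q_j} g_j^{q} \right)^{1/q} \left( \frac{1}{|Q_j|}\int_{Q_j} h^{q'}\right)^{1/q'} |Q_j|.
\end{equation}
Now $\left( \frac{1}{|Q_j|}\int_{Q_j} h^{q'}\right)^{1/q'} \chi_{Q_j}(x) \le (M^{(q')} h)(x)$ pointwise on $Q_j$, so summing over $j$,
\begin{equation} \label{pp-sum}
\int_{\mathbb{R}^n} \Big(\sum_j g_j\Big) h \le \int_{\mathbb{R}^n} \sum_j \left( \frac{1}{|Q_j|} \int_{Q_j} g_j^{q} \right)^{1/q} \chi_{Q_j}(x) \, (M^{(q')}h)(x) \, dx.
\end{equation}
Apply the generalized Hölder inequality $\int |FG| \le \|F\|_{X^{1/r}} \|G\|_{(X^{1/r})'}$ with $F = \sum_j \left( \frac{1}{|Q_j|}\int_{Q_j} g_j^q\right)^{1/q}\chi_{Q_j}$ and $G = M^{(q')}h$, together with hypothesis (\ref{A2}) — which, by Remark~\ref{cond equiv A2}, is exactly the statement that $M^{((q/r)')}$ is bounded on $(X^{1/r})'$, equivalently $M$ is bounded on $[(X^{1/r})']^{1/(q/r)'}$. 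Here I must match exponents: the operator appearing in (\ref{pp-sum}) is $M^{(q')}$, and (\ref{A2}) controls $M^{((q/r)')}$; since $r \le 1$ we have $(q/r)' \le q'$, so $M^{(q')} h \le M^{((q/r)')}(h^{q'/(q/r)'})^{(q/r)'/q'}$ — this interpolation-type bound between powered maximal operators needs the elementary inequality relating them, which is where a little care is needed but no real difficulty. Taking the supremum over all such $h$ and invoking $X^{1/r} = (X^{1/r})''$ yields $\left\|\sum_j g_j\right\|_{X^{1/r}} \lesssim \|F\|_{X^{1/r}}$, which, after unwinding, is (\ref{vector ineq}) — but wait, I have been sloppy: the left side of (\ref{vector ineq}) is the $X$-norm, not the $X^{1/r}$-norm, and likewise the right side. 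So in fact the pairing should be done directly: treat $\sum_j g_j \in X$, but $X$ may not be Banach, so instead use that $\|\cdot\|_X$ and $\|\cdot\|_{X^{1/r}}$ are related by $\|u\|_X = \||u|\|_{X} $ and one applies the above to $(\sum_j g_j)^{r}$? No — the correct route, as in \cite{Chen}, is to keep everything in $X$, using a ``$1/r$-th power trick'': write $\sum_j g_j = ((\sum_j g_j)^{?})$ — actually the honest statement is that (\ref{A2}) with exponent $(q/r)'$ is precisely calibrated so that, applying generalized Hölder for the pair $(X^{1/r}, (X^{1/r})')$ to $F^r$ and a dual function, one recovers the $X$-norm since $\|F^r\|_{X^{1/r}} = \|F\|_X^r$; this is the standard mechanism and I would follow \cite[Lemma 2.8]{Chen} verbatim at this point.

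The main obstacle is bookkeeping the exponents: making sure the power $r$, the exponent $q$, and the dual exponent $(q/r)'$ thread correctly through the generalized Hölder inequality so that the $X$-norm (not an auxiliary convexification norm) appears on both ends, and justifying the reduction to finitely many $j$ so that all interchanges of sum and integral are legitimate. Once the exponent matching in (\ref{pp-holder})–(\ref{pp-sum}) is set up correctly and (\ref{A2}) is invoked through Remark~\ref{cond equiv A2}, the rest is routine. I expect the proof to be short, essentially a transcription of the argument of \cite{Uribe} for weighted Lebesgue spaces into the abstract language, with Lemma~\ref{doble dual} substituting for the reflexivity-type duality used there.
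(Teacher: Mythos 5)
First, a point of comparison: the paper does not prove Lemma \ref{q estimate} at all; it is imported verbatim from \cite[Lemma 2.8 and Remark 2.9]{Chen}. So there is no in-paper argument to measure your proposal against, and it has to stand on its own. As written, it does not: you assemble the right ingredients (Lemma \ref{doble dual}, H\"older on each cube, a powered maximal function, generalized H\"older for the pair $(X^{1/r},(X^{1/r})')$, and hypothesis (\ref{A2})), but the step you yourself identify as ``the main obstacle'' --- threading the exponents $r$, $q$, $(q/r)'$ so that the $X$-norm appears on both sides --- is first carried out incorrectly and then explicitly deferred to ``[Chen] verbatim,'' which is precisely the part that needed to be written.

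Concretely, three gaps. (1) Your pairing is at the wrong level: testing $\sum_j g_j$ against $h\in(X^{1/r})'$ and taking the supremum recovers $\bigl\|\sum_j g_j\bigr\|_{(X^{1/r})''}=\bigl\|\sum_j g_j\bigr\|_{X^{1/r}}=\bigl\|(\sum_j g_j)^{1/r}\bigr\|_X^{r}$, not $\bigl\|\sum_j g_j\bigr\|_X$. One must instead pair $(\sum_j g_j)^{r}$ against $h$ (via $\|u\|_X^r=\|u^r\|_{X^{1/r}}$), use $(\sum_j g_j)^r\le\sum_j g_j^r$ for $r\le 1$, and apply H\"older on $Q_j$ to $g_j^r\cdot h$ with exponents $q/r$ and $(q/r)'$. (2) Because you apply H\"older with exponents $q$ and $q'$, your estimate produces $M^{(q')}h$, whereas (\ref{A2}) bounds $M^{((q/r)')}$ on $(X^{1/r})'$; since $(q/r)'\le q'$ for $r\le 1$, one has $M^{((q/r)')}h\le M^{(q')}h$ pointwise, so (\ref{A2}) controls the \emph{smaller} operator and says nothing about the one you need. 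Your proposed repair is an identity applied to the different function $h^{q'/(q/r)'}$, whose $(X^{1/r})'$-norm is not controlled, so it does not close the loop; the correct calibration (H\"older at exponent $q/r$ after taking $r$-th powers) makes $M^{((q/r)')}h$ appear exactly, which is why (\ref{A2}) is stated the way it is. (3) Even with the correct calibration, the pairing naturally yields $\bigl\|\sum_j A_j^{\,r}\chi_{Q_j}\bigr\|_{X^{1/r}}$ with $A_j=(\frac{1}{|Q_j|}\int_{Q_j}g_j^q)^{1/q}$, and since $\sum_j A_j^{\,r}\chi_{Q_j}\ge(\sum_j A_j\chi_{Q_j})^{r}$ pointwise for $r\le1$, the comparison with the claimed right-hand side $\bigl\|\sum_j A_j\chi_{Q_j}\bigr\|_X^{r}$ goes the wrong way wherever the cubes overlap. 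Dealing with this overlap is the genuinely nontrivial content of \cite[Lemma 2.8 and Remark 2.9]{Chen} and of its antecedent \cite[Lemma 4.9]{Uribe}; your sketch never reaches, let alone resolves, this point.
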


\begin{lemma} (\cite[Lemma 2.4]{Tan}) \label{Qalfan}
Let $0 < \alpha < n$ and $0 < p_0 < q_0 \leq 1$ be such that $\frac{1}{p_0} - \frac{1}{q_0} = \frac{\alpha}{n}$, and let $X$ and $Y$ be ball quasi-Banach function spaces such that $X^{1/p_0}$ and $Y^{1/q_0}$ are ball Banach function spaces and 
$(Y^{1/q_0})' = ((X^{1/p_0})')^{p_0/q_0}$. If the Hardy-Littlewood maximal operator $M$ is bounded on $(Y^{1/q_0})'$, then for any countable collection of cubes $\{ Q_j \}$, with $\sum_{j=1}^{\infty} \lambda_j \chi_{Q_j} \in X$, and $\lambda_j > 0$,
\begin{equation} 
\left\Vert  \sum_{j=1}^{\infty} \lambda_j |Q_j|^{\frac{\alpha}{n}} \chi_{2 Q_j} \right\Vert_{Y} \lesssim
\left\Vert \sum_{j=1}^{\infty} \lambda_j \chi_{Q_j}  \right\Vert_{X}.
\end{equation}
\end{lemma}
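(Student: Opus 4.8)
The plan is to reduce the claimed inequality to the vector-valued estimate in Lemma~\ref{q estimate} by absorbing the factor $|Q_j|^{\alpha/n}$ into suitably normalized averages. Since $\frac{1}{p_0} - \frac{1}{q_0} = \frac{\alpha}{n}$, the natural choice is $q = q_0/p_0 > 1$: write $\beta_j := \lambda_j |Q_j|^{\alpha/n}$ and $g_j := \beta_j \chi_{2Q_j}$, so that $\supp(g_j) \subset 2Q_j$ and $\sum_j g_j = \sum_j \lambda_j |Q_j|^{\alpha/n}\chi_{2Q_j}$. The hypotheses of Lemma~\ref{q estimate} apply to the space $Y$ with radius $q_0$ and exponent $q_0/p_0$: indeed $Y^{1/q_0}$ is a ball Banach function space, and the boundedness of $M$ on $(Y^{1/q_0})'$ gives (\ref{A2}) for $Y$ with that $q$ via Remark~\ref{cond equiv A2} (here $(q/r)' = (q_0/p_0)'$ after identifying $r = q_0$, up to checking the exponent bookkeeping). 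Applying (\ref{vector ineq}) to $\{2Q_j\}$ and $\{g_j\}$ bounds $\bigl\| \sum_j g_j \bigr\|_Y$ by $\bigl\| \sum_j \bigl( \frac{1}{|2Q_j|}\int_{2Q_j} g_j^{q}\bigr)^{1/q}\chi_{2Q_j}\bigr\|_Y$; but $\bigl(\frac{1}{|2Q_j|}\int_{2Q_j} g_j^q\bigr)^{1/q} = \beta_j$ since $g_j$ is constant on its support, so the right side is $\bigl\| \sum_j \lambda_j |Q_j|^{\alpha/n}\chi_{2Q_j}\bigr\|_Y$ — this is circular, so one must instead not collapse $2Q_j$ but keep one factor of $|Q_j|^{\alpha/n}$ outside the average.

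So the correct route is: set $g_j := \lambda_j \chi_{2Q_j}$ with the $|Q_j|^{\alpha/n}$ held aside, and use the $p_0$-convexification structure. First I would pass to the convexifications: $\bigl\| \sum_j \lambda_j |Q_j|^{\alpha/n}\chi_{2Q_j}\bigr\|_Y = \bigl\| \bigl(\sum_j \lambda_j |Q_j|^{\alpha/n}\chi_{2Q_j}\bigr)^{q_0}\bigr\|_{Y^{1/q_0}}^{1/q_0}$, and by disjointification arguments (or rather by the ball Banach function space duality $Y^{1/q_0} = ((Y^{1/q_0})')'$) test against $h \in (Y^{1/q_0})'$ with $\|h\|_{(Y^{1/q_0})'} = 1$. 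Then $\int \bigl(\sum_j \lambda_j |Q_j|^{\alpha/n}\chi_{2Q_j}\bigr)^{q_0} h \lesssim \sum_j \lambda_j^{q_0}|Q_j|^{\alpha q_0/n}\int_{2Q_j} h$ (using the $p_0/q_0 \le 1$ power and superadditivity only after raising to a fractional power — more carefully, split into the standard chain where one controls $\int_{2Q_j} h$ by $|2Q_j|\inf_{2Q_j} Mh$). Using $|Q_j|^{\alpha q_0/n} = |Q_j|^{q_0(1/p_0 - 1/q_0)} = |Q_j|^{q_0/p_0 - 1}$, the factor $|Q_j|^{\alpha q_0/n}\int_{2Q_j}h \approx |Q_j|^{q_0/p_0}\,\fint_{2Q_j} h$, and one recognizes $\sum_j \lambda_j^{q_0}|Q_j|^{q_0/p_0}\fint_{2Q_j}h = \sum_j \bigl(\lambda_j |Q_j|^{1/p_0}\bigr)^{q_0/p_0 \cdot p_0}\cdots$; this is exactly where the identity $(Y^{1/q_0})' = ((X^{1/p_0})')^{p_0/q_0}$ enters: it lets one convert the $Y^{1/q_0}$-side pairing into an $(X^{1/p_0})'$-pairing after applying the boundedness of $M$ on $(Y^{1/q_0})'$ to replace $h$ by $Mh$ and then using Hölder with exponent $q_0/p_0$ on the product $\lambda_j^{q_0}|Q_j| \cdot (Mh)$.

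Concretely, the key steps in order: (1) by homogeneity reduce to $\bigl\|\sum_j \lambda_j\chi_{Q_j}\bigr\|_X = 1$; (2) rewrite the $Y$-norm as a $Y^{1/q_0}$-norm of a $q_0$-th power and dualize against $(Y^{1/q_0})'$; (3) estimate $\int_{2Q_j} h \lesssim |Q_j|\,\inf_{2Q_j} Mh \lesssim |Q_j|\,\inf_{Q_j}Mh$ and pull the sum out using the $\ell^1 \hookrightarrow \ell^{q_0/p_0}$ embedding appropriately; (4) invoke boundedness of $M$ on $(Y^{1/q_0})'$ so that $Mh$ plays the role of a generic element of $(Y^{1/q_0})'$ of comparable norm; (5) apply Hölder with exponents $q_0/p_0$ and $(q_0/p_0)'$ to separate $\sum_j \lambda_j^{q_0}|Q_j|^{q_0/p_0-1}\chi_{Q_j}$-mass from $Mh$, and use the identity $(Y^{1/q_0})' = ((X^{1/p_0})')^{p_0/q_0}$ together with $\|g^{q_0/p_0}\|_{((X^{1/p_0})')^{p_0/q_0}} = \|g\|_{(X^{1/p_0})'}^{q_0/p_0}$ to turn the remaining factor into $\bigl\|\sum_j \lambda_j^{p_0}|Q_j|\cdots\bigr\|$ paired against $(X^{1/p_0})'$; (6) undo the convexification on the $X$-side to recover $\bigl\|\sum_j\lambda_j\chi_{Q_j}\bigr\|_X$. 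The main obstacle is step (5): getting the exponent bookkeeping exactly right so that the $|Q_j|^{\alpha/n}$ factors distribute correctly between the two Hölder factors and the convexification identity $(Y^{1/q_0})' = ((X^{1/p_0})')^{p_0/q_0}$ applies cleanly — in particular making sure the passage from $2Q_j$ to $Q_j$ (which costs a dimensional constant via Corollary~\ref{cubes estim}-type reasoning) does not interfere, and that the fractional power $p_0/q_0 \le 1$ is used in the right direction so no reverse triangle inequality is needed.
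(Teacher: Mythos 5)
First, a point of reference: the paper does not prove this lemma at all --- it is imported verbatim from \cite[Lemma 2.4]{Tan} --- so there is no internal argument to compare yours against; your attempt has to stand on its own.

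Your general framework is the right toolbox for statements of this kind (dualize the $Y^{1/q_0}$-norm against $(Y^{1/q_0})'$, estimate $\int_{2Q_j}h\lesssim|Q_j|\inf_{Q_j}Mh$, use the boundedness of $M$ on $(Y^{1/q_0})'$, and pass to the $X$-side via $(Y^{1/q_0})'=((X^{1/p_0})')^{p_0/q_0}$), and your observation that Lemma \ref{q estimate} alone is circular is correct. However, the chain you outline has a genuine gap precisely where the sub-unit exponents get distributed term by term over the sum in $j$. The steps $(\sum_j\cdots)^{q_0}\le\sum_j(\cdots)^{q_0}$, the $\ell^1\hookrightarrow\ell^{q_0/p_0}$ embedding, and the termwise H\"older in your step (5) deliver at the end the quantity $\bigl\|\sum_j\lambda_j^{p_0}\chi_{Q_j}\bigr\|_{X^{1/p_0}}^{1/p_0}$, whereas the target is $\bigl\|\bigl(\sum_j\lambda_j\chi_{Q_j}\bigr)^{p_0}\bigr\|_{X^{1/p_0}}^{1/p_0}=\bigl\|\sum_j\lambda_j\chi_{Q_j}\bigr\|_{X}$. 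Since $p_0\le 1$, the pointwise inequality goes the wrong way, namely $\sum_j\lambda_j^{p_0}\chi_{Q_j}\ge\bigl(\sum_j\lambda_j\chi_{Q_j}\bigr)^{p_0}$, so your step (6) (``undo the convexification'') is exactly the reverse inequality you were worried about, and it is false in general. The loss is real, not bookkeeping: take $X=L^{p_0}(\mathbb{R}^n)$, $Y=L^{q_0}(\mathbb{R}^n)$ (all hypotheses hold, with $(X^{1/p_0})'=(Y^{1/q_0})'=L^{\infty}$) and $N$ copies of the same unit cube with $\lambda_j=1$: the asserted inequality reads $N2^{n/q_0}\lesssim N$ and is true, but your chain only yields the upper bound $N^{1/p_0}$, which is not $\lesssim N$ uniformly in $N$ since $p_0<1$. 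Thus your argument proves only the weaker statement in which the right-hand side is $\bigl\|\{\sum_j\lambda_j^{p_0}\chi_{Q_j}\}^{1/p_0}\bigr\|_{X}$. A correct proof must keep the full sum $\sum_j\lambda_j\chi_{Q_j}$ intact when it reaches the dual pairing --- reassembling the coefficients into a single function before any power less than one or any H\"older inequality is applied across the index $j$ --- rather than splitting the $q_0$-th and $p_0$-th powers termwise.
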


We conclude this section with two Fefferman-Stein vector-valued inequality on the $s$-convexification of ball quasi-Banach functions spaces. The first is for the fractional maximal operator and the second one is for the Hardy-Littlewood maximal operator.

\begin{proposition} \label{vector-valued fract}
Let $0 < \alpha < n$, $1 < u < \infty$ and $0 < p_0 < q_0 \leq 1$ be such that $\frac{1}{p_0} - \frac{1}{q_0} = \frac{\alpha}{n}$, and let $X$ and $Y$ be ball quasi-Banach function spaces such that $X^{1/p_0}$ and $Y^{1/q_0}$ are ball Banach function spaces and 
$(Y^{1/q_0})' = ((X^{1/p_0})')^{p_0/q_0}$. If the Hardy-Littlewood maximal operator $M$ is bounded on $(Y^{1/q_0})'$, then for any 
$\sigma > \frac{1}{p_0}$ and any sequence of measurable functions $\{ f_j \}_{j=1}^{\infty}$ with $\left\{ \sum_{j=1}^{\infty} |f_j|^{u} \right\}^{1/u} \in X^{\sigma}$,
\begin{equation} \label{feff-stein fract max}
\left\Vert  \left\{ \sum_{j=1}^{\infty} (M_{\frac{\alpha}{\sigma}} f_j)^{u} \right\}^{1/u} \right\Vert_{Y^{\sigma}} \lesssim
\left\Vert \left\{ \sum_{j=1}^{\infty} |f_j|^{u} \right\}^{1/u}  \right\Vert_{X^{\sigma}}.
\end{equation}
\end{proposition}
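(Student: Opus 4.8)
The plan is to reduce the off-diagonal vector-valued inequality (\ref{feff-stein fract max}) to the scalar-valued (i.e., $\ell^u$-free) estimate of Lemma \ref{Qalfan}, via a discretization/linearization of the fractional maximal operator together with duality. First I would unwind the convexification notation: writing $g_j = |f_j|^{u}$ and working with the exponent $\sigma$, one checks by the definition of $X^\sigma$ that (\ref{feff-stein fract max}) is equivalent to
\[
\left\| \sum_{j=1}^{\infty} (M_{\alpha/\sigma} f_j)^{u} \right\|_{Y^{\sigma u}} \lesssim \left\| \sum_{j=1}^{\infty} |f_j|^{u} \right\|_{X^{\sigma u}},
\]
so it suffices to prove a \emph{single-exponent} estimate of the form $\big\| \sum_j (M_{\alpha/\sigma} f_j)^u \big\|_{Z} \lesssim \big\| \sum_j |f_j|^u \big\|_{W}$ for appropriate ball quasi-Banach spaces $Z, W$ that are $p$-convexifications of ball Banach function spaces. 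Here the condition $\sigma > 1/p_0$ is exactly what makes the target exponents land in a range where the relevant spaces are ball \emph{Banach} function spaces, so duality via Lemma \ref{doble dual} is available.

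Next I would discretize $M_{\alpha/\sigma}$: for a fixed locally integrable $f \geq 0$, standard arguments (a Calderón–Zygmund/Whitney selection of dyadic-type cubes, as in the classical Fefferman–Stein machinery) produce, for each $j$, a countable family of cubes $\{Q_{j,k}\}_k$ with bounded overlap such that
\[
M_{\alpha/\sigma} f_j \;\lesssim\; \sum_{k} \left( |Q_{j,k}|^{\frac{\alpha/\sigma}{n} - 1} \int_{Q_{j,k}} f_j \right) \chi_{2 Q_{j,k}} \;=\; \sum_k \lambda_{j,k}\, |Q_{j,k}|^{\frac{\alpha}{n\sigma}}\,\chi_{2Q_{j,k}},
\]
with $\lambda_{j,k} = |Q_{j,k}|^{-1}\int_{Q_{j,k}} f_j$. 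Relabeling $\{Q_{j,k}\}_{j,k}$ as a single countable collection $\{Q_i\}$ and collecting the coefficients, the left side of the desired estimate is controlled (using the quasi-triangle inequality / $\ell^u$-monotonicity and the bounded overlap) by $\big\| \sum_i \mu_i^u\, |Q_i|^{u\alpha/(n\sigma)}\, \chi_{2Q_i} \big\|_{Z}$ for suitable $\mu_i$. One then applies Lemma \ref{Qalfan} — or rather, the version of it obtained by replacing the pair $(X,Y)$ there with the pair $(W^{1/u}\text{-type}, Z^{1/u}\text{-type})$ and the exponent pair $(p_0,q_0)$ by the one dictated by $\alpha/\sigma$ — to pass from $\sum_i \mu_i^u |Q_i|^{\alpha'/n}\chi_{2Q_i}$ back to $\sum_i \mu_i^u \chi_{Q_i}$ in the norm of the source space. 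The final step is to recognize, again via Lemma \ref{q estimate} (the vector-valued inequality reconstructing $\sum g_j$ from local averages) or a direct averaging argument, that $\big\| \sum_i \mu_i^u \chi_{Q_i} \big\|_{W} \lesssim \big\| \sum_j |f_j|^u \big\|_{W}$, since the $\mu_i$ are $L^1$-averages of the $f_j$ over the $Q_i$.

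The main obstacle I anticipate is bookkeeping the exponents: one must verify that when $\sigma > 1/p_0$, the spaces playing the roles of ``$X$'' and ``$Y$'' in Lemma \ref{Qalfan} (namely the convexifications $W^{1/p_0'}$ and $Z^{1/q_0'}$ for the effective exponents $p_0', q_0'$ associated with $\alpha/\sigma$ instead of $\alpha$) are genuinely ball Banach function spaces with $q_0' \leq 1$, that the Köthe-duality relation $(Z^{1/q_0'})' = ((W^{1/p_0'})')^{p_0'/q_0'}$ is inherited from the hypothesis $(Y^{1/q_0})' = ((X^{1/p_0})')^{p_0/q_0}$, and that $M$ remains bounded on $(Z^{1/q_0'})'$. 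This is where the strict inequality $\sigma > 1/p_0$, as opposed to $\sigma \geq 1/p_0$, is essential — it provides the room needed to keep $q_0' \in (0,1]$ — and checking it carefully, using Lemma \ref{doble dual} to move between a space and its second associate, is the technical heart of the argument; the rest is the now-standard reduction of a Fefferman–Stein inequality to its linearized, atom-by-atom form.
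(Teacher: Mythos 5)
Your route is genuinely different from the paper's: the paper proves this by Rubio de Francia extrapolation, taking the family of pairs $(F,G)=\bigl(\{\sum_{j\le N}(M_{\alpha/\sigma}f_j)^u\}^{1/u},\{\sum_{j\le N}|f_j|^u\}^{1/u}\bigr)$, invoking the weighted off-diagonal vector-valued inequality for the fractional maximal operator with $\mathcal{A}_{\sigma p_0,\sigma q_0}$ weights (Muckenhoupt--Wheeden together with \cite[Theorem 3.23]{Cruz}), and manufacturing the required $\mathcal{A}_1$ weight via the iteration algorithm $\mathcal{R}g=\sum_k M^kg/(2^k\|M\|^k_{(Y^{1/q_0})'})$ applied to a normalized nonnegative element of $(Y^{1/q_0})'$; the hypothesis $(Y^{1/q_0})'=((X^{1/p_0})')^{p_0/q_0}$ enters exactly once, to convert $\|(\mathcal{R}g)^{p_0/q_0}\|_{(X^{1/p_0})'}$ into $\|\mathcal{R}g\|_{(Y^{1/q_0})'}$ after H\"older. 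Your discretization-plus-Lemma-\ref{Qalfan} plan has at least two gaps that I do not see how to close.

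First, the endgame is circular, or at any rate no more elementary than the statement being proved. After linearizing $M_{\alpha/\sigma}f_j\lesssim\sum_k\mu_{j,k}|Q_{j,k}|^{\alpha/(n\sigma)}\chi_{2Q_{j,k}}$ and (granting pointwise bounded overlap, which Calder\'on--Zygmund/sparse selections do not provide in general) pulling the power $u>1$ inside the sum, your last step is $\bigl\|\sum_i\mu_i^u\chi_{Q_i}\bigr\|_W\lesssim\bigl\|\sum_j|f_j|^u\bigr\|_W$ with $\mu_i$ an $L^1$-average of $f_{j(i)}$ over $Q_i$. Lemma \ref{q estimate} runs in the opposite direction (it bounds a sum of functions by a sum of averages), so it cannot supply this; what you actually need is $\mu_i^u\chi_{Q_i}\le(Mf_{j(i)})^u\chi_{Q_i}$ followed by the diagonal Fefferman--Stein inequality for $M$ on the relevant convexification, i.e.\ Proposition \ref{vector-valued H-L max} --- which the paper itself proves by the very extrapolation argument you are trying to bypass. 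Second, applying Lemma \ref{Qalfan} with $\alpha/\sigma$ in place of $\alpha$ requires new exponents $\tilde p_0<\tilde q_0\le1$ with $1/\tilde p_0-1/\tilde q_0=\alpha/(n\sigma)$ and, crucially, the K\"othe-duality identity $(Z^{1/\tilde q_0})'=((W^{1/\tilde p_0})')^{\tilde p_0/\tilde q_0}$ for the convexified spaces $W,Z$ playing the roles of $X,Y$. Convexification does not commute with passing to the associate space ($(X^p)'\neq(X')^p$ in general), so this identity is not inherited from the hypothesis on $(X,Y)$ in any obvious way; you correctly identify this as the technical heart of your argument, but you give no proof of it, and without it the reduction does not go through.
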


\begin{proof}
Given $0 < \alpha < n$ and $\sigma > \frac{1}{p_0} = \frac{1}{q_0} + \frac{\alpha}{n}$, we define
\[
\mathcal{F}_{\alpha} = \left\{ \left( \left\{ \sum_{j=1}^{N} (M_{\frac{\alpha}{\sigma}}f_j)^{u} \right\}^{1/u}, 
\left\{ \sum_{j=1}^{N} |f_j|^{u} \right\}^{1/u} \right) : N \in \mathbb{N}, \{f_j \}_{j=1}^{N} \subset L^{\infty}_{comp} \right\},
\]
where $L^{\infty}_{comp}$ denotes the set of all the bounded measurable functions on $\mathbb{R}^n$ with compact support. We observe 
that $L^{\infty}_{comp} \subset X^{\sigma}$, and for any $\{f_j \}_{j=1}^{N} \subset L^{\infty}_{comp}$ it is easy to check that 
$\left\{ \sum_{j=1}^{N} |f_j|^{u} \right\}^{1/u} \in X^{\sigma}$.

It is clear that $1 < \sigma p_0 < \frac{\sigma n}{\alpha}$, now for any $v \in \mathcal{A}_1$ one has that 
$v^{1/\sigma q_0} \in \mathcal{A}_{\sigma p_0, \sigma q_0}$ (for the definitions of the classes $\mathcal{A}_{1}$ and $\mathcal{A}_{p,q}$ see \cite[Chapter 7]{grafakos} and \cite[inequality (1.1)]{Muck}, respectively), from \cite[Theorem 3]{Muck} and \cite[Theorem 3.23]{Cruz} follow that there exists an universal constant $C > 0$ such that 
for any $(F, G) \in \mathcal{F}_{\alpha}$ and any $v \in \mathcal{A}_1$ 
\begin{equation} \label{weighted fract ineq}
\int [F(x)]^{\sigma q_0} v(x) \, dx \leq C \left( \int [G(x)]^{\sigma p_0} [v(x)]^{p_0/q_0} \, dx \right)^{q_0/p_0}.
\end{equation}
On the other hand, by Lemma \ref{doble dual}, we have
\begin{equation} \label{norma}
\| F \|^{\sigma q_0}_{Y^{\sigma}} =  \| F^{\sigma q_0} \|_{Y^{1/q_0}}
\leq C \sup \left\{ \int_{\mathbb{R}^{n}} \left| [F(x)]^{\sigma q_0} g(x) \right| dx : \| g \|_{(Y^{1/q_0})'} \leq 1 \right\}
\end{equation}
for some constant $C > 0$.

Let $\mathcal{R}$ be the operator defined on $(Y^{1/q_0})'$ by
\[
\mathcal{R}g(x) = \sum_{k=0}^{\infty} \frac{M^{k}g(x)}{2^{k} \| M \|_{(Y^{1/q_0})'}^{k}},
\]
where, for $k \geq 1$, $M^{k}$ denotes $k$ iterations of the Hardy-Littlewood maximal operator $M$, $M^{0} = M$, and 
$\| M \|_{(Y^{1/q_0})'}$ is the operator norm of the maximal operator $M$ on $(Y^{1/q_0})'$. It follows immediately from this 
definition that:

$(i)$ if $g$ is non-negative, $g(x) \leq \mathcal{R}g(x)$ a.e. $x \in \mathbb{R}^{n}$;

$(ii)$ $\| \mathcal{R}g \|_{(Y^{1/q_0})'} \leq 2 \| g \|_{(Y^{1/q_0})'}$; 

$(iii)$ $\mathcal{R}g \in \mathcal{A}_1$ with $[\mathcal{R}g]_{\mathcal{A}_1} \leq 2 \| M \|_{(Y^{1/q_0})'}$.
\\
Since $F$ is non-negative, we can take the supremum in (\ref{norma}) over those non-negative $g$ only. For any fixed non-negative 
$g \in (Y^{1/q_0})'$, by $(i)$ above we have that
\begin{equation} \label{int g}
\int [F(x)]^{\sigma q_0} g(x) dx \leq \int [F(x)]^{\sigma q_0} (\mathcal{R}g)(x) dx.
\end{equation}
Then $(iii)$ and (\ref{weighted fract ineq}), and H\"older's inequality yield
\begin{equation} \label{int Rg}
\int [F(x)]^{\sigma q_0} (\mathcal{R}g)(x) dx \leq C \left( \int [G(x)]^{\sigma p_0} [(\mathcal{R}g)(x)]^{p_0 / q_0} dx \right)^{q_0/p_0} 
\end{equation}
\[
\leq C \| G^{\sigma p_0} \|_{X^{1/p_0}}^{q_0/p_0} 
\|(\mathcal{R}g)^{p_0/q_0} \|_{(X^{1/p_0})'}^{q_0/p_0}
\]
\[
= C \| G \|^{\sigma q_0}_{X^{\sigma}} 
\|\mathcal{R}g \|_{((X^{1/p_0})')^{p_0/q_0}}
\]
by hypothesis we have that $((X^{1/p_0})')^{p_0/q_0} = (Y^{1/q_0})'$, so
\[
= C \| G \|^{\sigma q_0}_{X^{\sigma}} \| \mathcal{R}g \|_{(Y^{1/q_0})'}
\]
now, $(ii)$ gives
\[
\leq C \| G \|^{\sigma q_0}_{X^{\sigma}} \| g \|_{(Y^{1/q_0})'}.
\]
Thus, (\ref{int g}) and (\ref{int Rg}) lead to
\begin{equation} \label{norma2}
\int [F(x)]^{\sigma q_0} g(x) dx \leq C \| G \|^{\sigma q_0}_{X^{\sigma}},
\end{equation}
for all non-negative $g$ such that $\| g \|_{(Y^{1/q_0})'} \leq 1$. Then, (\ref{norma}) and (\ref{norma2}) give (\ref{feff-stein fract max}) 
for all finite sequences $\{f_j \}_{j=1}^{N} \subset L^{\infty}_{comp} \cap X^{\sigma}$. By passing to the limit, we 
obtain (\ref{feff-stein fract max}) for all infinite sequences $\{f_j \}_{j=1}^{\infty} \subset L^{\infty}_{comp}$ with 
$\left\{ \sum_{j=1}^{\infty} |f_j|^{u} \right\}^{1/u} \in X^{\sigma}$. For the general case, consider 
$f_{j, N} = f_j \chi_{\{x : |x|< N, |f(x)|<N \}}$, since $\left\{ \sum_{j=1}^{\infty} (M_{\frac{\alpha}{\sigma}} f_{j, N})^{u} \right\}^{1/u} \uparrow \left\{ \sum_{j=1}^{\infty} (M_{\frac{\alpha}{\sigma}} f_j)^{u} \right\}^{1/u}$ and $\left\{ \sum_{j=1}^{\infty} |f_{j, N}|^{u} \right\}^{1/u} \uparrow \left\{ \sum_{j=1}^{\infty} |f_j|^{u} \right\}^{1/u}$ as $N \to \infty$, then (\ref{feff-stein fract max}) holds for any sequence of measurable functions $\{ f_j \}_{j=1}^{\infty}$ with $\left\{ \sum_{j=1}^{\infty} |f_j|^{u} \right\}^{1/u} \in X^{\sigma}$.
\end{proof}

Proceeding as in the proof of Proposition \ref{vector-valued fract}, but considering now the $\mathcal{A}_p$ class instead of the 
$\mathcal{A}_{p, q}$ class and \cite[Theorem 9]{Muckenh} and \cite[Corollary 3.12]{Cruz} instead of \cite[Theorem 3]{Muck} and 
\cite[Theorem 3.23]{Cruz} respectively, we obtain the following result.

\begin{proposition} \label{vector-valued H-L max}
Let $1 < u < \infty$, $0 < p_0 \leq 1$ and let $X$ be ball quasi-Banach function spaces such that $X^{1/p_0}$ is a ball Banach function spaces. If the Hardy-Littlewood maximal operator $M$ is bounded on $(X^{1/p_0})'$, then for any 
$\sigma > \frac{1}{p_0}$ and any sequence of measurable functions $\{ f_j \}_{j=1}^{\infty}$ with $\left\{ \sum_{j=1}^{\infty} |f_j|^{u} \right\}^{1/u} \in X^{\sigma}$,
\begin{equation} \label{feff-stein ineq}
\left\Vert  \left\{ \sum_{j=1}^{\infty} (M f_j)^{u} \right\}^{1/u} \right\Vert_{X^{\sigma}} \lesssim
\left\Vert \left\{ \sum_{j=1}^{\infty} |f_j|^{u} \right\}^{1/u}  \right\Vert_{X^{\sigma}}.
\end{equation}
\end{proposition}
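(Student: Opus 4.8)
The plan is to follow the proof of Proposition \ref{vector-valued fract} almost verbatim, replacing the fractional ingredients by their non-fractional analogues. First I would introduce, for fixed $\sigma > 1/p_0$ and $1 < u < \infty$, the family
\[
\mathcal{F} = \left\{ \left( \left\{ \sum_{j=1}^{N} (M f_j)^{u} \right\}^{1/u}, \left\{ \sum_{j=1}^{N} |f_j|^{u} \right\}^{1/u} \right) : N \in \mathbb{N}, \ \{f_j\}_{j=1}^{N} \subset L^{\infty}_{comp} \right\},
\]
and note, exactly as before, that $L^{\infty}_{comp} \subset X^{\sigma}$ and each second coordinate lies in $X^{\sigma}$. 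The key observation is that $1 < \sigma p_0$, so that for any $v \in \mathcal{A}_1$ we have $v \in \mathcal{A}_{\sigma p_0}$, and the weighted vector-valued Fefferman-Stein inequality for $M$ (the Andersen--John / Cruz-Uribe version, i.e.\ \cite[Theorem 9]{Muckenh} together with \cite[Corollary 3.12]{Cruz}) gives a universal constant $C > 0$ such that for every $(F, G) \in \mathcal{F}$ and every $v \in \mathcal{A}_1$,
\[
\int [F(x)]^{\sigma p_0} v(x)\, dx \leq C \int [G(x)]^{\sigma p_0} v(x)\, dx.
\]
This is the diagonal analogue of (\ref{weighted fract ineq}), with $q_0$ replaced by $p_0$ and no change of exponent on the weight.

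Next I would run the same Rubio de Francia iteration argument. By Lemma \ref{doble dual},
\[
\| F \|^{\sigma p_0}_{X^{\sigma}} = \| F^{\sigma p_0} \|_{X^{1/p_0}} \leq C \sup\left\{ \int_{\mathbb{R}^n} |[F(x)]^{\sigma p_0} g(x)|\, dx : \|g\|_{(X^{1/p_0})'} \leq 1 \right\},
\]
where the passage from $X^{1/p_0}$ to its second associate space is legitimate because $X^{1/p_0}$ is a ball Banach function space. Since $F \geq 0$ the supremum is over non-negative $g$ only. I would define the same algorithm
\[
\mathcal{R}g(x) = \sum_{k=0}^{\infty} \frac{M^{k}g(x)}{2^{k}\,\|M\|_{(X^{1/p_0})'}^{k}},
\]
which, by boundedness of $M$ on $(X^{1/p_0})'$ (the hypothesis of the proposition), satisfies $g \leq \mathcal{R}g$ a.e., $\|\mathcal{R}g\|_{(X^{1/p_0})'} \leq 2\|g\|_{(X^{1/p_0})'}$, and $\mathcal{R}g \in \mathcal{A}_1$ with $[\mathcal{R}g]_{\mathcal{A}_1} \leq 2\|M\|_{(X^{1/p_0})'}$. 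Then for non-negative $g$ with $\|g\|_{(X^{1/p_0})'} \leq 1$,
\[
\int [F(x)]^{\sigma p_0} g(x)\, dx \leq \int [F(x)]^{\sigma p_0} (\mathcal{R}g)(x)\, dx \leq C \int [G(x)]^{\sigma p_0} (\mathcal{R}g)(x)\, dx,
\]
and now, crucially, the H\"older step is the \emph{trivial} diagonal pairing of $X^{1/p_0}$ with $(X^{1/p_0})'$: the right side is bounded by $C\|G^{\sigma p_0}\|_{X^{1/p_0}}\|\mathcal{R}g\|_{(X^{1/p_0})'} = C\|G\|^{\sigma p_0}_{X^{\sigma}}\|\mathcal{R}g\|_{(X^{1/p_0})'} \leq C\|G\|^{\sigma p_0}_{X^{\sigma}}$ by property $(ii)$. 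Taking the supremum over such $g$ yields $\|F\|_{X^{\sigma}} \lesssim \|G\|_{X^{\sigma}}$, that is, (\ref{feff-stein ineq}) for all finite sequences in $L^{\infty}_{comp}$.

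Finally I would extend to infinite sequences and then to arbitrary measurable sequences by two monotone-convergence passages, exactly as in the last paragraph of the proof of Proposition \ref{vector-valued fract}: first let $N \to \infty$ using property $(v)$ of Remark \ref{prop X} on $X^{\sigma}$ for partial sums, then truncate $f_{j,N} = f_j \chi_{\{|x| < N,\, |f_j(x)| < N\}}$ and use that $\{\sum_j (M f_{j,N})^u\}^{1/u} \uparrow \{\sum_j (M f_j)^u\}^{1/u}$ and likewise for the $G$-side. I do not expect any genuine obstacle here, since this is strictly the diagonal ($\alpha = 0$) specialization of the already-proved off-diagonal result; the only points requiring a moment's care are that $\sigma p_0 > 1$ so that the weighted estimate is available in the right $\mathcal{A}_p$ range, and that one invokes the correct scalar weighted vector-valued inequality for $M$ (with $\mathcal{A}_p$ in place of $\mathcal{A}_{p,q}$, as indicated in the sentence preceding the statement).
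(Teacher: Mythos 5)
Your proposal is correct and follows exactly the route the paper indicates: the paper proves this proposition by repeating the argument of Proposition \ref{vector-valued fract} with the $\mathcal{A}_p$ class in place of $\mathcal{A}_{p,q}$ and the diagonal weighted vector-valued inequality (\cite[Theorem 9]{Muckenh} together with \cite[Corollary 3.12]{Cruz}) in place of the fractional one, which is precisely your substitution, including the Rubio de Francia iteration on $(X^{1/p_0})'$ and the two monotone limiting passages.
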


\begin{remark} 
Other versions of Proposition \ref{vector-valued H-L max} can be found in \cite[Section 2.5 on p. 18]{sawa}.
\end{remark}

\begin{corollary} \label{Acot HL}
Let $0 < p_0 \leq 1$ and let $X$ be ball quasi-Banach function spaces such that $X^{1/p_0}$ is a ball Banach function spaces. If the 
Hardy-Littlewood maximal operator $M$ is bounded on $(X^{1/p_0})'$, then $M$ is bounded on $X^\sigma$ for any $\sigma > \frac{1}{p_0}$.
\end{corollary}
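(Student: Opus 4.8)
The plan is to derive this immediately from Proposition \ref{vector-valued H-L max} by specializing the vector-valued inequality to a single function. Fix any exponent $u \in (1, \infty)$, for instance $u = 2$. Note that $X$, $p_0$ and $\sigma$ here satisfy exactly the hypotheses required in Proposition \ref{vector-valued H-L max}: $p_0 \in (0,1]$, $X^{1/p_0}$ is a ball Banach function space, $M$ is bounded on $(X^{1/p_0})'$, and $\sigma > 1/p_0$. In particular $X^{\sigma}$ is a well-defined ball quasi-Banach function space (by the definition of the $\sigma$-convexification).

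Given an arbitrary $f \in X^{\sigma}$, I would consider the sequence $\{ f_j \}_{j=1}^{\infty}$ defined by $f_1 := f$ and $f_j := 0$ for all $j \geq 2$. Then
\[
\left\{ \sum_{j=1}^{\infty} |f_j|^{u} \right\}^{1/u} = |f| \in X^{\sigma},
\]
so this sequence is admissible in Proposition \ref{vector-valued H-L max}. Applying (\ref{feff-stein ineq}) to it and using that $M0 = 0$, hence $\left\{ \sum_{j=1}^{\infty} (Mf_j)^{u} \right\}^{1/u} = Mf$, we obtain
\[
\| Mf \|_{X^{\sigma}} \lesssim \| f \|_{X^{\sigma}},
\]
with the implicit constant independent of $f$ (it is the same constant as in (\ref{feff-stein ineq}), which is already uniform over sequences of arbitrary length). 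This is precisely the boundedness of $M$ on $X^{\sigma}$.

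There is no real obstacle here: the only point that deserves a remark is that the constant in the Fefferman–Stein inequality is a priori independent of the number of functions in the sequence — but Proposition \ref{vector-valued H-L max} is already stated for infinite sequences with a single uniform constant, so nothing further is needed.
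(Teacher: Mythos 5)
Your argument is correct and coincides with the paper's own proof, which likewise deduces the corollary by applying Proposition \ref{vector-valued H-L max} to the sequence $f_1 = f$, $f_j = 0$ for $j \geq 2$. Nothing further is needed.
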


\begin{proof}
Apply Proposition \ref{vector-valued H-L max} with $f_1 \in X^\sigma \setminus \{ 0 \}$ and $f_j = 0$ for all $j \geq 2$.
\end{proof}

\section{Main results} \label{resultados principales}

\subsection{Kernels of type $(\alpha, N)$}

We recall the definition of a kernel of type $(\alpha, N)$ on $\mathbb{R}^n$. Suppose $0 \leq \alpha < n$ and $N \in \mathbb{N}$. 
For $0 < \alpha < n$, a kernel of type $(\alpha, N)$ is a function $K_{\alpha}$ of class $C^{N}$ on $\mathbb{R}^{n} \setminus \{ 0 \}$, which satisfies
\begin{equation} \label{decay}
\left|(\partial^{\beta}K_{\alpha})(x) \right| \lesssim |x|^{\alpha - n - |\beta|} \,\,\,\, \text{for all} \,\, |\beta| \leq N \,\,\, \text{and all} \,\, x \neq 0,
\end{equation}
A kernel of type $(0,N)$ is a distribution $K_{0}$ on $\mathbb{R}^{n}$ which is of class $C^{N}$ on $\mathbb{R}^{n} \setminus \{ 0 \}$, satisfies (\ref{decay}) with $\alpha = 0$, and
\begin{equation} \label{cota L2}
\| K_0 \ast f \|_{L^{2}(\mathbb{R}^{n})} \leq C \| f \|_{L^{2}(\mathbb{R}^{n})}, \,\,\, \text{for all} \,\, f \in \mathcal{S}(\mathbb{R}^{n}).
\end{equation}

\begin{remark} \label{operator Ta}
If $0 < \alpha < n$ and $K_{\alpha}$ is a kernel of type $(\alpha, N)$, from \cite[Proposition 6.2]{Folland}, it follows that the 
operator $T_{\alpha} : f \to K_{\alpha} \ast f$ is a bounded operator $L^{p}(\mathbb{R}^{n}) \to L^{q}(\mathbb{R}^{n})$ for 
$1 < p < \frac{n}{\alpha}$ and $\frac{1}{q} = \frac{1}{p} - \frac{\alpha}{n}$.
\end{remark}

\begin{remark} \label{operator T0}
If $K_0$ is a kernel of type $(0,N)$, by (\ref{cota L2}) it follows that the operator $U_0 : f \to K_0 \ast f$, 
$f \in \mathcal{S}(\mathbb{R}^{n})$, can be extended to a bounded operator on $L^{2}(\mathbb{R}^{n})$, a such extension is unique. We 
denote this extension by $T_0$. Now, it is easy to check that if $a(\cdot) \in L^{2}(\mathbb{R}^{n})$ and their support is contained 
in the cube $Q(x_0, r)$, then $(T_{0} \, a)(x) = (K_{0} \ast a)(x)$ a.e. $x \notin Q(x_0, 2r)$. Moreover, from 
\cite[Theorem 6.10]{Folland}, it follows that $T_0$ is bounded on $L^p(\mathbb{R}^n)$ for every $1 < p < \infty$.
\end{remark}

\subsection{Estimates for the operator $T_{\alpha}$}

Given a kernel $K_{\alpha}$ of type $(\alpha, N)$ with $0 \leq \alpha < n$, we consider the operator $T_{\alpha}$ defined by
\begin{equation} \label{operator T alpha}
T_{\alpha} = \left\{ \begin{array}{c}
                      \text{extension of the operator} \,\, U_0 \,\, \text{on} \,\, L^{2}(\mathbb{R}^{n}), \,\, \text{if} \,\, \alpha = 0 \\
                      \text{convolution operator by} \, K_{\alpha}, \,\, \text{if} \,\, 0 < \alpha < n
											\end{array} \right..
\end{equation}

Our main result is contained in the following theorem.

\begin{theorem} \label{main thm}
Given $0 \leq \alpha < n$ and $N \in \mathbb{N}$, let $T_{\alpha}$ be the operator by convolution with a kernel $K_{\alpha}$ of type 
$(\alpha, N)$ defined by (\ref{operator T alpha}) and let $X$ and $Y$ be ball quasi-Banach function spaces such that the quasi-norm of $X$ is  absolutely continuous satisfying (\ref{A1}), (\ref{A2}) and $X$ is strictly $s$-convex, where $s \in (0,1]$ is as in (\ref{A1}), and $Y$ satisfies (\ref{A2}) with $q > \frac{n}{n-\alpha}$. 

$(i)$ If for some $0 < p_0 \leq 1$, $X^{1/p_0}$ is a ball Banach function space, the Hardy-Littlewood operator $M$ is bounded on 
$(X^{1/p_0})'$ and $N > \max \{d_X + 1, \, n(1/p_0 - 1)\}$, then the operator $T_{0}$ extends to a bounded operator 
$H_{X}(\mathbb{R}^n) \to X$ and $H_{X}(\mathbb{R}^n) \to H_{X}(\mathbb{R}^n)$.

$(ii)$ If for some $0 < p_0 < q_0 \leq 1$ such that $\frac{1}{p_0} - \frac{1}{q_0} = \frac{\alpha}{n}$ with $0 < \alpha < n$, $X^{1/p_0}$ 
and $Y^{1/q_0}$ are ball Banach function spaces such that $(Y^{1/q_0})' = ((X^{1/p_0})')^{p_0/q_0}$, the Hardy-Littlewood operator 
$M$ is bounded on $(Y^{1/q_0})'$ and $N > \max \{d_X + 1, \, n(1/p_0 - 1)\}$, then the operator $T_{\alpha}$ extends to a bounded operator 
$H_{X}(\mathbb{R}^n) \to Y$ and $H_{X}(\mathbb{R}^n) \to H_{Y}(\mathbb{R}^n)$.
\end{theorem}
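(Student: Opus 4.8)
The plan is to reduce the boundedness of $T_{\alpha}$ on $H_X(\mathbb{R}^n)$ to an estimate on a single atom, and then to pass from atoms to general distributions by the atomic decomposition of Theorem~\ref{X atomic decomp} together with the density provided by Proposition~\ref{dense set}. Fix $p \in (1, \infty)$ (for $\alpha = 0$) or the exponent $q$ of Remark~\ref{operator Ta} (for $0 < \alpha < n$); given $f \in H_X(\mathbb{R}^n) \cap L^p(\mathbb{R}^n)$ write $f = \sum_j \lambda_j a_j$ with $(X, \infty, d)$-atoms $a_j$ supported on cubes $Q_j$, where $d \geq d_X$ is chosen with $d < N$ (possible since $N > d_X + 1$). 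By Remark~\ref{infinite atom} each $a_j$ is also an $(X, p, d)$-atom (resp. $(X, q, d)$-atom in case $(ii)$), so $a_j \in L^2(\mathbb{R}^n)$ locally and $T_{\alpha} a_j$ is well defined; moreover the $L^p$-convergence in $(\ref{converg Lp})$ and the $L^p \to L^p$ (resp. $L^p \to L^q$) boundedness of $T_{\alpha}$ from Remarks~\ref{operator Ta} and~\ref{operator T0} give $T_{\alpha} f = \sum_j \lambda_j T_{\alpha} a_j$ in $L^p$ (resp. $L^q$), hence a.e. along a subsequence, which is what is needed to control pointwise the grand maximal function of $T_{\alpha} f$.

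The core is the following single-atom estimate: if $a$ is an $(X, \infty, d)$-atom supported on a cube $Q = Q(x_0, r)$, then I will show
\[
\mathcal{M}^{0}_{L}(T_{\alpha} a)(x) \lesssim \frac{1}{\| \chi_Q \|_X}\Big( \chi_{\delta Q}(x) \cdot M(\chi_Q)(x)^{\gamma_1} \,+\, \big(\text{tail term}\big) \Big),
\]
more precisely a bound of the form $\mathcal{M}^{0}_{L}(T_{\alpha} a)(x) \lesssim \| \chi_Q\|_X^{-1}\big[ h_Q(x) + |Q|^{\alpha/n}\, r^{d+1}\,|x-x_0|^{-n-d-1}\chi_{(\delta Q)^c}(x)\big]$ split into a local part (handled by the $L^2$ or $L^p\!-\!L^q$ boundedness of $T_{\alpha}$ plus Hölder on $\delta Q$, using Corollary~\ref{cubes estim} to absorb $\|\chi_{\delta Q}\|_X$ into $\|\chi_Q\|_X$) and a global part. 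For the global part one uses the cancellation $a \perp \mathcal{P}_d$ and the decay $(\ref{decay})$ of the kernel up to order $N > d+1$: writing $(\phi_t \ast T_{\alpha} a)(x) = \int a(y)\,(\phi_t \ast K_{\alpha})(x-y)\,dy$ and subtracting the degree-$d$ Taylor polynomial of $y \mapsto (\phi_t \ast K_{\alpha})(x-y)$ at $y = x_0$, the standard estimate gives the size $r^{d+1}|x - x_0|^{\alpha - n - d - 1}$ for $x \notin \delta Q$, uniformly in $t$ and $\phi \in \mathcal{F}_L$ (this is where $L \geq \lfloor n/r + 3\rfloor$ and $N$ large enough enter). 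The outcome is that $\mathcal{M}^0_L(T_\alpha a)$ is dominated, modulo constants, by $\|\chi_Q\|_X^{-1}$ times a fixed function built from $Q$ that in turn is $\lesssim |Q|^{\alpha/n}$ times a power $M(\chi_Q)^{(n+d+1)/n}$ of the Hardy--Littlewood maximal function of $\chi_Q$.

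Summing over $j$ and invoking the vector-valued machinery of Section~\ref{auxiliar} completes the argument. In case $(i)$, $\alpha = 0$: one bounds $\| \sum_j \lambda_j \mathcal{M}^0_L(T_0 a_j)\|_X$ by first reducing to $\|\{\sum_j (\lambda_j/\|\chi_{Q_j}\|_X)^s \chi_{\delta Q_j}\}^{1/s}\|_X$ via Lemma~\ref{q estimate} (with exponent $q$ from $(\ref{A2})$; the local $L^p$ pieces of $T_0 a_j$ supply the required integral bound over $Q_j$) plus the Fefferman--Stein inequality of Proposition~\ref{vector-valued H-L max} / Corollary~\ref{Acot HL} for the tail pieces, and this last quantity is $\lesssim \|f\|_{H_X}$ by $(\ref{atomic norm})$; the $H_X \to H_X$ bound follows identically but estimating $\mathcal{M}^0_L(T_0 a_j)$ itself rather than $|T_0 a_j|$. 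In case $(ii)$, the off-diagonal Proposition~\ref{vector-valued fract} (with $\sigma$ slightly larger than $1/p_0$, using $q > n/(n-\alpha)$ to make the relevant maximal operator bounded) and Lemma~\ref{Qalfan} play the roles that Proposition~\ref{vector-valued H-L max} and Lemma~\ref{q estimate} played in case $(i)$, taking the target space to be $Y$ (resp. $H_Y$) and using $(Y^{1/q_0})' = ((X^{1/p_0})')^{p_0/q_0}$; again the end quantity is $\lesssim \|f\|_{H_X}$. Finally, density (Proposition~\ref{dense set}) extends the a priori estimate from $H_X \cap L^p$ to all of $H_X$. The main obstacle I expect is the bookkeeping in the single-atom estimate—keeping the dependence on $r = \ell(Q)$, on $t$, and on $\phi \in \mathcal{F}_L$ uniform while splitting into near and far regions, and verifying that the far-region decay exponent is large enough (forcing $N > n(1/p_0 - 1)$ and $N > d_X + 1$) so that the resulting function of $Q$ is controlled by a power of $M(\chi_Q)$ to which Propositions~\ref{vector-valued fract} and~\ref{vector-valued H-L max} apply.
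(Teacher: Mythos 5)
Your overall architecture coincides with the paper's: atomic decomposition with $(X,\infty,d)$-atoms for $d=N-1$, a near/far splitting around a fixed dilate of each $Q_j$, Lemma \ref{q estimate} and Lemma \ref{Qalfan} for the local pieces, the vector-valued inequalities of Section \ref{auxiliar} for the tails, and Proposition \ref{dense set} to pass from $H_X(\mathbb{R}^n)\cap L^p(\mathbb{R}^n)$ to all of $H_X(\mathbb{R}^n)$. Case $(i)$ as you describe it is essentially the paper's proof.

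There is, however, a genuine error in your single-atom tail estimate when $0<\alpha<n$. The cancellation-plus-Taylor argument gives, for $x\notin \delta Q$,
\[
|T_\alpha a(x)| \lesssim \frac{1}{\|\chi_Q\|_X}\, |Q|\, r^{d+1}\, |x-x_0|^{\alpha-n-d-1} = \frac{r^{\,n+d+1}}{\|\chi_Q\|_X}\, |x-x_0|^{\alpha-n-d-1},
\]
whereas your claimed majorant $\|\chi_Q\|_X^{-1}|Q|^{\alpha/n}\,[M(\chi_Q)(x)]^{(n+d+1)/n}\approx \|\chi_Q\|_X^{-1} r^{\alpha}\, r^{\,n+d+1}|x-x_0|^{-(n+d+1)}$ is smaller than this by the unbounded factor $(|x-x_0|/r)^{\alpha}$, so the asserted pointwise domination is false (as written it also has the wrong homogeneity, since the factor $|Q|$ coming from integrating over $Q$ is missing). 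The correct majorant --- and the one that makes Proposition \ref{vector-valued fract} applicable, since that proposition is stated for the fractional maximal operator $M_{\alpha/\sigma}$ rather than for $M$ --- is $\|\chi_Q\|_X^{-1}[M_{\alpha/\sigma}(\chi_Q)(x)]^{\sigma}$ with $\sigma=(n+d+1)/n=(n+N)/n$; indeed $[M_{\alpha/\sigma}(\chi_Q)(x)]^{\sigma}\approx r^{n+N}|x-x_0|^{\alpha-n-N}$ for $x$ far from $Q$, which matches the tail exactly, and $\sigma>1/p_0$ is precisely what the hypothesis $N>n(1/p_0-1)$ buys. This is how the paper proceeds. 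An alternative repair, writing the tail as $|Q|^{\alpha/n}[M(\chi_Q)]^{(n+d+1-\alpha)/n}$, is pointwise correct, but then Proposition \ref{vector-valued fract} no longer applies directly and you would have to recombine the $|Q_j|^{\alpha/n}$ weights via Lemma \ref{Qalfan}, which only handles characteristic functions of $2Q_j$. The rest of your outline, including the uniform-in-$t$ and $\phi$ estimate of $\partial^\beta(K_\alpha\ast\phi_t)$ needed for the $H_X(\mathbb{R}^n)\to H_Y(\mathbb{R}^n)$ part (the paper cites Folland--Stein for this), is consistent with the paper once this exponent is corrected.
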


\begin{proof}
We will first prove, by means of the atomic decomposition of $H_{X}(\mathbb{R}^n)$, that the operator $T_{\alpha}$, $0 < \alpha < n$, extends to a bounded operator $H_{X}(\mathbb{R}^{n}) \to Y$ and that $T_0$, $\alpha = 0$, extends to a bounded operator $H_{X}(\mathbb{R}^{n}) \to X$.
For them, given $0 \leq \alpha < n$, let $K_{\alpha}$ be a kernel of the type $(\alpha, N)$ with $N > \max \{d_X + 1, \, n(1/p_0 - 1)\}$, where $\frac{1}{p_0} = \frac{1}{q_0} + \frac{\alpha}{n}$ is as in $(i)$ or $(ii)$ according to the case. Then, given 
$f \in H_{X}(\mathbb{R}^{n}) \cap L^{p} (\mathbb{R}^{n})$ (with $p > 1$), by Theorem \ref{X atomic decomp} with $d=N - 1$, there exist a sequence of nonnegative numbers $\{ \lambda_j \}_{j=1}^{\infty}$, a sequence of cubes $\{ Q_j \}_{j=1}^{\infty}$ and $(X, \infty, N-1)$-atoms 
$a_j$ supported on $Q_j$ such that $f = \displaystyle{\sum_{j=1}^{\infty} \lambda_j a_j}$ converges in $L^{p}(\mathbb{R}^{n})$ and
\begin{equation} \label{atomic HX norm}
\left\| \left\{ \sum_{j=1}^{\infty} \left( \frac{\lambda_j}{\| \chi_{Q_j} \|_X} \right)^s \chi_{Q_j} \right\}^{1/s} \right\|_{X} 
\lesssim_{s} \| f \|_{H_{X}(\mathbb{R}^{n})}.
\end{equation}
For $0 < \alpha < n$, we have that $Y$ satisfies (\ref{A2}), with $q > \frac{n}{n - \alpha}$. For a such $q$, 
we put $\frac{1}{p} := \frac{1}{q} + \frac{\alpha}{n}$. Then, by Remark \ref{operator Ta}, we have that the operator $T_{\alpha}$ is 
bounded from $L^{p}(\mathbb{H}^{n})$ to $L^{q}(\mathbb{H}^{n})$. For the case $\alpha = 0$, by Remark \ref{operator T0}, the operator 
$T_0$ is bounded on $L^{p}(\mathbb{R}^{n})$ for every $1 < p < \infty$. In this case, we consider $p = q$ where $q$ is as in (\ref{A2}). 
Since $f = \displaystyle{\sum_{j=1}^{\infty} \lambda_j a_j}$ converges in $L^{p}(\mathbb{R}^{n})$, according to the case $0 < \alpha < n$ 
or $\alpha = 0$, we have that $T_{\alpha}f = \sum_{j} \lambda_j T_{\alpha}a_j$ converges in $L^{q}(\mathbb{R}^{n})$, and so
\[
|T_{\alpha} f(x)| \leq \sum_j \lambda_j |T_{\alpha} a_j(x)|, \,\,\, a.e. \,\, x \in \mathbb{R}^{n}.
\]
Then, for $0 < \alpha < n$,
\begin{equation} \label{I1 I2}
\| T_{\alpha} f \|_{Y} \lesssim \left\| \sum_{j} \lambda_j \chi_{2Q_{j}} |T_{\alpha} a_j| \right\|_{Y}  + 
\left\| \sum_j \lambda_j \chi_{\mathbb{R}^{n} \setminus 2Q_{j}} |T_{\alpha} a_j| \right\|_{Y} =: I_1 + I_2,
\end{equation}
and, for $\alpha = 0$, we have
\begin{equation} \label{I1 I2 tilde}
\| T_{0} f \|_{X} \lesssim \left\| \sum_{j} \lambda_j \chi_{2Q_{j}} |T_{\alpha} a_j| \right\|_{X}  + 
\left\| \sum_j \lambda_j \chi_{\mathbb{R}^{n} \setminus 2Q_{j}} |T_{\alpha} a_j| \right\|_{X} =: \widetilde{I}_1 + \widetilde{I}_2,
\end{equation}
where $2Q_j = Q(z_j, 2r_j)$. To estimate $I_1$, we first apply Remark \ref{operator Ta} to the expression 
$\chi_{2 Q_j} \cdot T_{\alpha} a_j $ followed by Remark \ref{infinite atom} and Corollory \ref{cubes estim} to obtain
\[
\left\| T_{\alpha} a_j \right\|_{L^{q}(2Q_{j})} \lesssim \left\| a_j \right\|_{L^{p}} \lesssim 
\frac{| Q_j |^{\frac{1}{p}}}{\left\| \chi _{Q_j }\right\|_{X}} \lesssim 
\frac{\left| 2Q_{j} \right|^{\frac{1}{q}+\frac{\alpha}{n}}}{\left\| \chi_{2Q_{j}} \right\|_{X}},
\]
so
\begin{equation} \label{average}
\left( \frac{1}{|2Q_{j}|} \int_{2Q_j} |T_{\alpha} a_j|^{q}  \right)^{1/q} \lesssim 
\frac{\left| Q_{j} \right|^{\frac{\alpha}{n}}}{\left\| \chi_{Q_{j}} \right\|_{X}}.
\end{equation}
Now, successively applying Lemma \ref{q estimate} (considering there $Y$ instead of $X$), (\ref{average}), Lemma \ref{Qalfan}, 
Corollary \ref{cubes estim}, the $s$-inequality for $s \in (0, 1]$, and (\ref{atomic HX norm}), we have
\begin{equation} \label{I1}
I_1 = \left\| \sum_{j} \lambda_j \, \chi_{2 Q_j} \cdot T_{\alpha} a_j  \right\|_{Y} \lesssim 
\left\| \sum_{j} \lambda_j \left( \frac{1}{|2Q_{j}|} \int_{2Q_j} |T_{\alpha} a_j|^{q}  \right)^{1/q} \chi_{2 Q_j} 
\right\|_{Y} 
\end{equation}
\[
\lesssim \left\| \sum_{j} \lambda_j \frac{\left| Q_{j} \right|^{\frac{\alpha}{n}}}{\left\| \chi_{Q_{j}} \right\|_{X}} 
\chi_{2 Q_j} \right\|_{Y} \lesssim
\left\| \sum_{j}  \frac{\lambda_j}{\left\| \chi_{Q_{j}} \right\|_{X}} \chi_{ Q_j} \right\|_{X}
\]
\[
\lesssim \left\| \left\{ \sum_{j} \left( \frac{\lambda_j }{\| \chi_{Q_j} \|_{X}} \right)^s \chi_{Q_j} \right\}^{1/s} 
\right\|_{X} \lesssim_{s} \| f \|_{H_{X}(\mathbb{R}^n)}.
\]
Since (\ref{average}) holds with $\alpha = 0$ and $p = q$, proceeding as in the estimate of (\ref{I1}), we obtain
\begin{equation} \label{I1tilde}
\widetilde{I_1} = \left\| \sum_{j} \lambda_j \chi_{2Q_{j}} |T_{\alpha} a_j| \right\|_{X} \lesssim_{s} \| f \|_{H_{X}(\mathbb{R}^n)}.
\end{equation}
Now, we estimate the terms $I_2$ and $\widetilde{I}_2$. Being every $a_j(\cdot)$ an $(X, \infty, N-1)$-atom with 
$N > \max \{d_X + 1, n(1/p_0 - 1)\}$ and support on the cube $Q_j = Q(z_j, r_j)$, by the moment condition for $a_j(\cdot)$ 
(i.e. $a_j \perp \mathcal{P}_{N-1}$), and Remark \ref{operator Ta} (for $0 < \alpha < 0$) or Remark \ref{operator T0} (for $\alpha = 0$), we obtain
\[
T_{\alpha}a_j(x) = \int_{Q_j} \left( K_{\alpha}(x-y) - q_{x, N}(y) \right) a_j(y) dy, \,\,\,\,\,\, \textit{for all} \,\,\, x \notin 2Q_j,
\]
where $q_{x, N}$ is the degree $N-1$ Taylor polynomial of the function $y \rightarrow K_{\alpha}(x-y)$ expanded around $z_j$. By the standard estimate of the remainder term of the Taylor expansion and (\ref{decay}), for any $y  \in Q_j$ and any $x \notin 2Q_j$, we get
\[
\left|K_{\alpha}(x-y) - q_{x, N}(y) \right| \leq C r^{N}_j |x - z_j|^{-n + \alpha - N},
\]
this inequality together with $\| a_j \|_{L^{\infty}} \leq \| \chi_{Q_j} \|_{X}^{-1}$  allow us to conclude that
\begin{equation} \label{Talfa puntual}
|T_{\alpha} a_j(x) | \lesssim \frac{r^{n+N}_j}{\| \chi_{Q_j} \|_{X}} |x-z_j|^{-n + \alpha - N}
\lesssim \frac{\left[ M_{\frac{\alpha n}{n+N}}(\chi_{Q_j}) (x) \right]^{\frac{n+N}{n}}}{\| \chi_{Q_j} \|_{X}},
\end{equation}
for all $x \notin 2Q_j$. Putting $\sigma= \frac{n+N}{n}$, (\ref{Talfa puntual}) leads to
\[
I_2 \lesssim \left\| \left\{ \sum_j  \frac{\lambda_j}{\| \chi_{Q_j} \|_{X}} \left[ M_{\frac{\alpha}{\sigma}}(\chi_{Q_j}) \right]^{\sigma} \right\}^{1/\sigma} \right\|^{\sigma}_{Y^{\sigma}}. 
\]
Since $\sigma = \frac{n+N}{n} > \frac{1}{p_0}$, to apply Lemma \ref{vector-valued fract} with $u = \sigma$, we obtain
\begin{equation} \label{I2}
I_2 \lesssim \left\| \left\{ \sum_j  \frac{\lambda_j}{\| \chi_{Q_j} \|_{X}} \chi_{Q_j} \right\}^{1/\sigma} 
\right\|^{\sigma}_{X^{\sigma}} = \left\| \sum_j \frac{\lambda_j}{\| \chi_{Q_j} \|_{X}} \chi_{Q_j} \right\|_{X}
\lesssim_{s} \| f \|_{H_{X}(\mathbb{R}^n)}. 
\end{equation}
Observing that (\ref{Talfa puntual}) holds with $\alpha = 0$ and to proceed as in the estimate of $I_2$, but now applying 
Lemma \ref{vector-valued H-L max}, we obtain
\begin{equation} \label{I2tilde}
\widetilde{I}_2 = \left\| \sum_j \lambda_j \chi_{\mathbb{R}^{n} \setminus 2Q_{j}} |T_{\alpha} a_j| \right\|_{X} 
\lesssim_{s} \| f \|_{H_{X}(\mathbb{R}^n)}.
\end{equation}
So, by (\ref{I1 I2}), (\ref{I1}), (\ref{I2}) and Proposition \ref{dense set}, the operator $T_{\alpha}$, $0 < \alpha < n$, extends to a bounded operator $H_{X}(\mathbb{R}^n) \to Y$. Now, by (\ref{I1 I2 tilde}), (\ref{I1tilde}), (\ref{I2tilde}) and Proposition \ref{dense set}, we have that $T_0$ extends to a bounded operator $H_{X}(\mathbb{R}^n) \to X$.

\

Now, by using the atomic decomposition of $H_{X}(\mathbb{R}^n)$ and the maximal characterization of $H_{Y}(\mathbb{R}^n)$ 
and $H_{X}(\mathbb{R}^n)$, we will prove that the operator $T_{\alpha}$, $0 < \alpha < n$, extends to a bounded operator 
$H_{X}(\mathbb{R}^{n}) \to H_{Y}(\mathbb{R}^{n})$ and that $T_0$ extends to a bounded operator 
$H_{X}(\mathbb{R}^{n}) \to H_{X}(\mathbb{R}^{n})$. Let $0 \leq \alpha < 0$ and let $\frac{1}{p_0} = \frac{1}{q_0} + \frac{\alpha}{n}$ be 
as in $(i)$ or $(ii)$ according to the case. Given $f \in H_{X}(\mathbb{R}^{n}) \cap L^{p} (\mathbb{R}^{n})$, we consider its atomic decomposition $f = \sum_j \lambda_j a_j$, and $N > \max \{d_X + 1, \, n(1/p_0 - 1)\}$. Now, by Corollary \ref{Acot HL}, we have that, for 
$r = \frac{p_0}{p_0 + 1}$ ($\frac{1}{p_0} = \frac{1}{q_0} + \frac{\alpha}{n}$), the Hardy-Littlewood maximal operator $M$ is bounded 
on $X^{1/r}$ or on $Y^{1/r}$. Then, by Remark \ref{Hx space}, we have that, for 
$L \geq \left\lfloor \frac{n(p_0 + 1)}{p_0} + 3 \right\rfloor$, 
$\| f \|_{H_X (\mathbb{R}^{n})} = \| \mathcal{M}^{0}_{L} f \|_X$ for $f \in H_X (\mathbb{R}^{n})$ and 
$\| g \|_{H_Y (\mathbb{R}^{n})} = \| \mathcal{M}^{0}_{L} g \|_Y$ for $g \in H_Y (\mathbb{R}^{n})$.

On the other hand, for $0 \leq \alpha < n$, we put $\frac{1}{p} := \frac{1}{q} + \frac{\alpha}{n}$, where for $0 < \alpha < n$, $q$ is as in 
(\ref{A2}) but with $Y$ instead of $X$ and $q > \frac{n}{n - \alpha}$; for $\alpha =0$, $q$ is as in (\ref{A2}). Since $T_{\alpha}$ is bounded from $L^{p}(\mathbb{R}^{n})$ into $L^{q}(\mathbb{R}^{n})$, $H^{q}(\mathbb{R}^{n}) \equiv L^{q}(\mathbb{R}^{n})$ with equivalent norms, and $T_{\alpha}f = \sum_{j} \lambda_j T_{\alpha}a_j$ converges in $L^{q}(\mathbb{R}^{n})$, it follows, for $0 \leq \alpha < n$, that
\[
\mathcal{M}^{0}_{L}(T_{\alpha} f)(x) \leq \sum_{j=1}^{\infty} \lambda_j \mathcal{M}^{0}_{L}(T_{\alpha} a_j)(x), \,\,\, a.e. \,\, x \in \mathbb{R}^{n}.
\]
Then, for $0 < \alpha < n$,
\[
\| T_{\alpha} f \|_{H_{Y}} = \| \mathcal{M}^{0}_{L}(T_{\alpha} f) \|_{Y} \lesssim \left\| \sum_{j} \lambda_j \chi_{2 Q_{j}} 
\mathcal{M}^{0}_L(T_{\alpha} a_j) \right\|_{Y}  
\]
\[
+ 
\left\| \sum_j \lambda_j \chi_{\mathbb{R}^{n} \setminus 2 Q_{j}} \mathcal{M}^{0}_L(T_{\alpha} a_j) \right\|_{Y} =: J_1 + J_2.
\]
For, $\alpha = 0$ and $L$ as above, we have
\[
\| T_{0} f \|_{H_{X}} = \| \mathcal{M}^{0}_{L}(T_{0} f) \|_{X} \lesssim \left\| \sum_{j} \lambda_j \chi_{2 Q_{j}} 
\mathcal{M}^{0}_L(T_{0} a_j) \right\|_{X}  
\]
\[
+ 
\left\| \sum_j \lambda_j \chi_{\mathbb{R}^{n} \setminus 2 Q_{j}} \mathcal{M}^{0}_L(T_{0} a_j) \right\|_{X} =: \widetilde{J}_1 + \widetilde{J}_2.
\]
To estimate $J_1$ or $\widetilde{J}_1$, we observe that for $0 \leq \alpha < n$ and $\frac{1}{q} = \frac{1}{p} - \frac{\alpha}{n}$, with 
$q > \frac{n}{n - \alpha}$ satisfying (\ref{A2}) for $X$ or $Y$ according the case,
\[
\left\| \mathcal{M}^{0}_{L} (T_{\alpha} a_j) \right\|_{L^{q}(2Q_{j})} \lesssim 
\left\| T_{\alpha} a_j \right\|_{L^{q}} \lesssim \left\| a_j \right\|_{L^{p}}
\lesssim 
\frac{| Q_j |^{\frac{1}{p}}}{\left\| \chi _{Q_j }\right\|_{X}} \lesssim 
\frac{\left| 2Q_{j} \right|^{\frac{1}{q}+\frac{\alpha}{n}}}{\left\| \chi_{2Q_{j}} \right\|_{X}}.
\]
Then, by proceeding as above in the estimate of $I_1$ or $\widetilde{I}_1$, according to the case, we get
\[
J_1, \,\, \widetilde{J}_1 \lesssim \left\| \left\{ \sum_{j} \left( \frac{\lambda_j }{\| \chi_{Q_j} \|_{X}} \right)^s 
\chi_{Q_j} \right\}^{1/s} \right\|_{X} \lesssim_{s} \| f \|_{H_{X}}.
\]
Now, we estimate $J_2$ and $\widetilde{J}_2$. Then, for $x \notin 2 Q_j$ and every $t > 0$, by the $(N-1)$-moment condition of the atoms, 
we have
\[
((T_{\alpha} a_j) \ast \phi_t)(x) = \int_{Q_j} a_j(y) (K_{\alpha} \ast \phi_t)(x-y) \, dy  
\]
\[
= \int_{Q_j} a_j(y) \left[(K_{\alpha} \ast \phi_t)(x-y) - q_{x, t, N}(y) \right] \, dy, 
\] 
where $y \to q_{x, t, N}(y)$ is the Taylor polynomial of the function $y \to (K_{\alpha} \ast \phi_t)(x-y)$ at 
$z_j$ of degree $N-1$. Let $\phi \in \mathcal{S}(\mathbb{R}^{n})$ with $\| \phi \|_{\mathcal{S}(\mathbb{R}^{n}), \, L} \leq 1$ 
(where $L \geq \left\lfloor \frac{n(p_0 + 1)}{p_0} + 3 \right\rfloor$ and sufficiently large), by \cite[Lemma 6.9]{Folland} applied with $G=\mathbb{R}^n$, $r = N$ and $0 \leq \alpha < n$, we have
\[
|\partial^{\beta}(K_{\alpha} \ast \phi_t)(u)| = |((\partial^{\beta} K_{\alpha}) \ast \phi_t)(u)|
\lesssim \| \phi \|_{\mathcal{S}(\mathbb{R}^{n}), \, L} |u|^{\alpha - n - |\beta|} \lesssim |u|^{\alpha - n - |\beta|},
\]
for all $u \neq 0$, $t > 0$ and $|\beta| \leq N$. Then, by the standard estimate of the remainder term of the Taylor expansion, for any 
$y  \in Q_j$ and any $x \notin 2Q_j$, we obtain
\[
\left| (K_{\alpha} \ast \phi_t)(x-y) - q_{x, t, N}(y) \right| \lesssim r^{N}_j  |x-z_j|^{-n+ \alpha - N},
\]
and so, for $0 \leq \alpha < n$ and $x \notin 2 Q_j$,
\[
|((T_{\alpha} a_j) \ast \phi_t)(x)| \lesssim \frac{\left[ M_{\frac{\alpha n}{n+N}}(\chi_{Q_j}) (x) \right]^{\frac{n+N}{n}}}{\| \chi_{Q_j} \|_{X}}.
\]
This estimate does not depend on $t>0$ and $\phi$ with $\| \phi \|_{\mathcal{S}(\mathbb{R}^{n}), \, L} \leq 1$. Then, applying the ideas to estimate $I_2$ and $\widetilde{I}_2$ above to the present case and taking the supremum on $t>0$ and $\phi \in \mathcal{F}_L$, we obtain
\[
J_2, \, \widetilde{J}_2 \lesssim \left\| \left\{ \sum_{j} \left( \frac{\lambda_j }{\| \chi_{Q_j} \|_{X}} \right)^s 
\chi_{Q_j} \right\}^{1/s} \right\|_{X} \lesssim_{s} \| f \|_{H_{X}}.
\]
Finally, by the estimates of $J_1, J_2, \widetilde{J}_1$, $\widetilde{J}_2$ and Proposition \ref{dense set}, we have that $T_{\alpha}$, $0 < \alpha < n$, extends to a bounded operator $H_{X}(\mathbb{R}^{n}) \to H_{Y}(\mathbb{R}^{n})$ and $T_0$ extends to a bounded operator $H_{X}(\mathbb{R}^{n}) \to H_{X}(\mathbb{R}^{n})$. This concludes the proof.
\end{proof}

\subsection{Singular integrals} \label{Sing integ} Let $\Omega \in C^{\infty}(S^{n-1})$ with $\int_{S^{n-1}} \Omega(u) d\sigma(u)=0$. We define the operator $T$ by
\begin{equation}
Tf(x) = \lim_{\epsilon \rightarrow 0^{+}} \int_{|y| > \epsilon} \frac{\Omega(y/|y|)}{|y|^{n}} f(x-y) \, dy, \,\,\,\, x \in \mathbb{R}^{n}. \label{T}
\end{equation}
It is well known that $\widehat{Tf}(\xi) = m(\xi) \widehat{f}(\xi)$, where the multiplier $m$ is homogeneous of degree $0$ and is indefinitely diferentiable on $\mathbb{R}^{n} \setminus \{0\}$. Moreover, if $k(y) = \frac{\Omega(y/|y|)}{|y|^{n}}$ we have
\begin{equation} \label{k estimate}
|\partial^{\beta} k(y) |\leq C |y|^{-n-|\beta|}, \,\,\,\,  \textit{for all } \,\, y \neq 0 \,\, \textit{and all multi-index} \,\, \beta. 
\end{equation}
The operator $T$ results bounded on $L^{p}(\mathbb{R}^{n})$ for every $1 < p < +\infty$ (see \cite{elias}).

\begin{theorem}  \label{Sing estimates}
Let $X$ be ball quasi-Banach function space as in Theorem \ref{main thm}. Then 
the singular operator $T$ given by (\ref{T}) extends to a bounded operator $H_{X}(\mathbb{R}^n) \to X$ and 
$H_{X}(\mathbb{R}^n) \to H_{X}(\mathbb{R}^n)$.
\end{theorem}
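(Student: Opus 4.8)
The plan is to realize $T$ as a convolution operator $T_0$ with a kernel of type $(0,N)$ and then invoke Theorem \ref{main thm}(i). First I would verify that the kernel $k(y) = \Omega(y/|y|)/|y|^n$, extended as the principal value distribution $K_0$, is a kernel of type $(0,N)$ for every $N \in \mathbb{N}$: the size/derivative bounds (\ref{decay}) with $\alpha = 0$ are exactly the estimates (\ref{k estimate}) recorded above (these follow from $\Omega \in C^{\infty}(S^{n-1})$ by differentiating the homogeneous-degree-$(-n)$ function $k$), and the $L^2$ bound (\ref{cota L2}) holds because $\widehat{T f}(\xi) = m(\xi)\widehat f(\xi)$ with $m$ bounded (being homogeneous of degree $0$ and smooth away from the origin, hence bounded on $S^{n-1}$ and therefore on all of $\mathbb{R}^n \setminus \{0\}$). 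Thus $U_0 : f \mapsto K_0 \ast f$ on $\mathcal{S}(\mathbb{R}^n)$ extends to a bounded operator on $L^2(\mathbb{R}^n)$, and by Remark \ref{operator T0} its (unique) extension $T_0$ is bounded on $L^p(\mathbb{R}^n)$ for all $1 < p < \infty$; one also checks $T_0$ agrees with the principal-value operator $T$ of (\ref{T}) on $\mathcal{S}(\mathbb{R}^n)$, so $T_0$ is the asserted extension of $T$.

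Next I would check that the hypotheses of Theorem \ref{main thm}(i) are met. The space $X$ is assumed, as in Theorem \ref{main thm}, to have absolutely continuous quasi-norm, to satisfy (\ref{A1}) and (\ref{A2}) with the relevant parameters, and to be strictly $s$-convex; moreover there is $0 < p_0 \le 1$ with $X^{1/p_0}$ a ball Banach function space and $M$ bounded on $(X^{1/p_0})'$. The only remaining point is the size requirement on $N$: since $K_0$ qualifies as a kernel of type $(0,N)$ for \emph{every} $N$, we are free to choose $N$ as large as we wish, in particular $N > \max\{d_X + 1,\, n(1/p_0 - 1)\}$, and then $d := N-1 \ge d_X$ as required for the atomic decomposition. (Here $\alpha = 0$, so $q_0 = p_0$ and the hypothesis $q > n/(n-\alpha) = 1$ in Theorem \ref{main thm} reduces to $q > 1$, which is part of (\ref{A2}) for $X$.)

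With these verifications in place, Theorem \ref{main thm}(i) applies verbatim with $Y = X$ and yields that $T_0$ extends to a bounded operator $H_X(\mathbb{R}^n) \to X$ and $H_X(\mathbb{R}^n) \to H_X(\mathbb{R}^n)$, which is exactly the assertion. I do not expect a genuine obstacle here: the statement is essentially a specialization, and the only slightly delicate bookkeeping is confirming that the classical Calderón--Zygmund kernel $k$ fits the abstract definition of a kernel of type $(0,N)$ and that the principal-value operator $T$ and the $L^2$-extension $T_0$ coincide on the Schwartz class (so that the conclusion about $T_0$ is indeed a conclusion about $T$). Both facts are standard consequences of the homogeneity and smoothness of $\Omega$ together with the multiplier identity, so the proof is short once they are noted.
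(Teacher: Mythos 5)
Your proposal is correct and reduces the theorem to Theorem \ref{main thm}(i) exactly as the paper does, but the technical route through which you realize $T$ as an admissible $T_0$ differs from the paper's. You treat the principal value distribution $K_0 = \mathrm{p.v.}\, k$ directly as a kernel of type $(0,N)$ for every $N$, getting the derivative bounds (\ref{decay}) from the smoothness and homogeneity of $k$ and the $L^2$ bound (\ref{cota L2}) from the boundedness of the multiplier $m$; you then note that $N$ may be taken larger than $\max\{d_X+1,\, n(1/p_0-1)\}$ and apply Theorem \ref{main thm}(i) once. The paper instead truncates, setting $k_{\epsilon}=\chi_{\{|y|>\epsilon\}}\,\Omega(y/|y|)|y|^{-n}$, applies Theorem \ref{main thm}(i) to each $T_{\epsilon}f=k_{\epsilon}\ast f$ with constants uniform in $\epsilon$, and then lets $\epsilon\to 0^{+}$. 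The truncation buys a kernel that is an honest locally integrable function, so no distributional principal value needs to be discussed, but it costs a limiting argument that the paper leaves implicit (one must justify that $T_{\epsilon}f\to Tf$ in a sense compatible with the $X$- and $H_X$-quasi-norms), and strictly speaking $k_{\epsilon}$ has a jump across $|y|=\epsilon$, so it is not $C^{N}$ on all of $\mathbb{R}^n\setminus\{0\}$ as the definition of a type-$(0,N)$ kernel literally requires. Your direct approach avoids both issues and is, if anything, the cleaner specialization; the only point you should make explicit is the one you already flag, namely that the $L^2$-extension $T_0$ of $f\mapsto K_0\ast f$ coincides with the principal-value operator (\ref{T}) on a dense class, so the conclusion for $T_0$ is genuinely a conclusion for $T$.
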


\begin{proof}
Given $\epsilon > 0$, we put $k_{\epsilon}(y) = \chi_{\{ |y| > \epsilon \}}(y) \frac{\Omega(y/|y|)}{|y|^{n}}$ and 
$T_{\epsilon}f = k_{\epsilon} \ast f$. It is clear that $k_{\epsilon}$ satisfies (\ref{k estimate}) and 
$\|  k_{\epsilon} \ast f\|_2 \leq C \| f \|_2$ for all $f \in \mathcal{S}(\mathbb{R}^n)$, where the constant $C$ does not depend 
on $\epsilon$. Then, applying the Theorem \ref{main thm} - (i) with $K_{0, \epsilon} = k_{\epsilon}$ and letting $\epsilon \to 0$, the theorem follows.
\end{proof}

\subsection{Fractional integrals} \label{Fract integ} Let $0 < \alpha < n$, the Riesz potential $I_{\alpha}$ on $\mathbb{R}^{n}$ is defined by
\begin{equation} \label{Riesz}
(I_{\alpha}f)(x) = \int_{\mathbb{R}^{n}} |x-y|^{\alpha - n} f(y) \, dy.
\end{equation}
It is clear that $|\cdot|^{\alpha - n} \in C^{\infty}(\mathbb{R}^{n} \setminus \{0\}) = \displaystyle{\bigcap_{N \in \mathbb{N}}} 
C^{N}(\mathbb{R}^{n} \setminus \{0\})$ and satisfies the condition (\ref{decay}) for every $N \in \mathbb{N}$. Then, to apply the 
Theorem \ref{main thm} - (ii) with $K_{\alpha}(\cdot) = |\cdot|^{\alpha - n}$, we obtain the following result.

\begin{theorem}  \label{Riesz estimates}
Let $0 < \alpha < n$, and let $X$ and $Y$ be ball quasi-Banach function spaces as in Theorem \ref{main thm}. Then 
the Riesz potential $I_{\alpha}$ given by (\ref{Riesz}) extends to a bounded operator $H_{X}(\mathbb{R}^n) \to Y$ and 
$H_{X}(\mathbb{R}^n) \to H_{Y}(\mathbb{R}^n)$.
\end{theorem}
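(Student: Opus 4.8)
The plan is to derive Theorem \ref{Riesz estimates} directly from Theorem \ref{main thm}~(ii) by exhibiting the Riesz kernel $K_{\alpha}(x) := |x|^{\alpha - n}$ as a kernel of type $(\alpha, N)$ for an arbitrarily large $N \in \mathbb{N}$. First I would record that $x \mapsto |x|^{\alpha - n}$ is real-analytic, hence of class $C^{\infty}$, on $\mathbb{R}^{n} \setminus \{ 0 \} = \bigcap_{N \in \mathbb{N}} C^{N}(\mathbb{R}^{n} \setminus \{ 0 \})$; consequently there is no ceiling on the admissible order of smoothness, and given the spaces $X$ and $Y$ of the statement I may simply choose $N > \max \{ d_X + 1, \, n(1/p_0 - 1) \}$, where $\tfrac{1}{p_0} = \tfrac{1}{q_0} + \tfrac{\alpha}{n}$ is the exponent associated to $X$ and $Y$ as in Theorem \ref{main thm}~(ii).

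Next I would verify the decay bounds (\ref{decay}). Since $|x|^{\alpha - n}$ is positively homogeneous of degree $\alpha - n$, each partial derivative $\partial^{\beta} K_{\alpha}$ is positively homogeneous of degree $\alpha - n - |\beta|$ and smooth away from the origin; restricting to the compact sphere $S^{n-1}$ gives $|\partial^{\beta} K_{\alpha}(u)| \leq C_{\beta}$ for $|u| = 1$, and then homogeneity yields $|\partial^{\beta} K_{\alpha}(x)| = |x|^{\alpha - n - |\beta|} \, |\partial^{\beta} K_{\alpha}(x/|x|)| \lesssim |x|^{\alpha - n - |\beta|}$ for all $x \neq 0$. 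This is precisely (\ref{decay}), so $K_{\alpha}$ is a kernel of type $(\alpha, N)$ for every $N$, and the associated convolution operator $T_{\alpha} f = K_{\alpha} \ast f$ coincides with $I_{\alpha}$ on $\mathcal{S}(\mathbb{R}^{n})$ by (\ref{Riesz}).

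Finally I would invoke Theorem \ref{main thm}~(ii) with this kernel and this $N$, noting that the hypotheses imposed on $X$ and $Y$ in the present statement are, by design, exactly those required there ($X$ has absolutely continuous quasi-norm, satisfies (\ref{A1}), (\ref{A2}) and is strictly $s$-convex; $Y$ satisfies (\ref{A2}) with $q > \tfrac{n}{n-\alpha}$; and there exist $0 < p_0 < q_0 \leq 1$ with $\tfrac{1}{p_0} - \tfrac{1}{q_0} = \tfrac{\alpha}{n}$, with $X^{1/p_0}$, $Y^{1/q_0}$ ball Banach function spaces, $(Y^{1/q_0})' = ((X^{1/p_0})')^{p_0/q_0}$, and $M$ bounded on $(Y^{1/q_0})'$). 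This immediately delivers the boundedness of $I_{\alpha}$ from $H_{X}(\mathbb{R}^n)$ to $Y$ and from $H_{X}(\mathbb{R}^n)$ to $H_{Y}(\mathbb{R}^n)$. I do not expect any genuine obstacle: the entire analytic content is already carried by Theorem \ref{main thm}, and the only point demanding care is the bookkeeping check that $|\cdot|^{\alpha - n}$ genuinely satisfies the kernel conditions for all $N$, which the homogeneity argument above settles.
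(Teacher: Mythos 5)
Your proposal is correct and follows essentially the same route as the paper: the paper likewise observes that $|\cdot|^{\alpha-n}\in C^{\infty}(\mathbb{R}^{n}\setminus\{0\})$ satisfies (\ref{decay}) for every $N$ and then applies Theorem \ref{main thm}~(ii) with $K_{\alpha}(\cdot)=|\cdot|^{\alpha-n}$. Your homogeneity argument simply makes explicit the verification of the decay bounds that the paper states without detail.
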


\section{Examples} \label{4 examples}

In this section we illustrate our results with four concrete examples of Hardy type spaces associated with ball quasi-Banach function spaces 
$X$ and $Y$ satisfying the hypotheses of Theorem \ref{main thm}. 

\

{\bf Weighted Hardy spaces.} Given $0 < p < \infty$ and a weight $w \in \mathcal{A}_{\infty}$ (see \cite{grafakos}, \cite{Stein}), the weighted Lebesgue space $L^{p}_{w}(\mathbb{R}^n)$ is defined as the set of all the measurable functions $f$ such that
\[
\| f \|_{L^{p}_{w}} := \left(  \int_{\mathbb{R}^n} |f(x)|^p w(x) dx \right)^{1/p} < \infty.
\]
By \cite[Section 7.1]{sawa}, the couple $(L^{p}_{w}(\mathbb{R}^n), \| \cdot \|_{L^{p}_{w}})$ is a ball quasi-Banach function space, being the quasi-norm $\| \cdot \|_{L^{p}_{w}}$ \textit{absolutely continuous}. If $p > 1$, then $L^p_w (\mathbb{R}^n)$ is a ball Banach function space with $(L^p_w)' = L^{p'}_{w^{1-p'}}$, where $\frac{1}{p} + \frac{1}{p'} = 1$. In \cite[Section 7.1]{sawa}, it was pointed out that a weighted Lebesgue space may not be a Banach function space. By Fatou's Lemma, it follows that for any $s \in (0,p)$, the space 
$L^{p}_{w}(\mathbb{R}^n)$ is \textit{strictly $s$-convex}, where $(L^{p}_{w}(\mathbb{R}^n))^{1/s} = L^{p/s}_{w}(\mathbb{R}^n)$ .

Given $0 \leq \alpha < n$, let $X:= L^{p}_{w}(\mathbb{R}^n)$, with $p \in (0, \frac{n}{\alpha})$ and $w \in \mathcal{A}_{\infty}$. Then, for any $s \in (0, p)$ and $w \in \mathcal{A}_{p/s}$, the Hardy-Littlewood maximal operator $M$ is bounded on $L^{p/s}_w$ and so also on 
$(L^{p/s}_{w})' = L^{(p/s)'}_{w^{1-(p/s)'}}$ since $w^{1-(p/s)'} \in \mathcal{A}_{(p/s)'}$ (see \cite[Theorem 9]{Muckenh}).  
For $0 <  p < \frac{n}{\alpha}$, we define $\frac{1}{q} : = \frac{1}{p} - \frac{\alpha}{n}$ and 
$Y := L^q_{w^{q/p}} (\mathbb{R}^n)$. For any $p_0 \in (0, \min\{ \frac{n}{n+\alpha}, p \})$ fixed, we put $\frac{1}{q_0} := \frac{1}{p_0} - 
\frac{\alpha}{n}$, then $0 < p_0 \leq q_0 \leq 1$, $q_0 \in (0, q)$, $X^{1/p_0}$ and $Y^{1/q_0}$ are ball Banach function spaces and 
$M$ is bounded on $(Y^{1/q_0})'$. Since $\frac{1}{p} - \frac{1}{q} = \frac{\alpha}{n} = \frac{1}{p_0} - \frac{1}{q_0}$, it follows that 
$(Y^{1/q_0})' = ((X^{1/p_0})')^{p_0/q_0}$. For $X := L^p_w (\mathbb{R}^n)$, the assumption (\ref{A1}) holds true for any 
$\theta, s \in (0,1]$, $\theta < s$, $p \in (\theta, \frac{n}{\alpha})$ and $w \in \mathcal{A}_{p/\theta}$ (it follows from 
\cite[Theorem 3.1 (b)]{John} applied on $L^{p/\theta}_w$, with $r = s/\theta$ and $|f_j|^{\theta}$ instead of $f_k$); 
the assumption (\ref{A2}), by duality, \cite[Theorem 9]{Muckenh} and Remark \ref{cond equiv A2}, holds true for any 
$r \in (0, \min\{ 1, p \})$, $w \in \mathcal{A}_{p/r}$, and $\widetilde{q} \in (\max\{1, p\}, \infty)$ sufficiently large such that 
$w^{1-(p/r)'} \in \mathcal{A}_{(p/r)'/(\widetilde{q}/r)'}$. Similarly, the assumption (\ref{A2}) holds true for $Y = L^q_{w^{q/p}} (\mathbb{R}^n)$. Finally, $H_{X}(\mathbb{R}^n) = H^{p}_w(\mathbb{R}^n)$ and $H_{Y}(\mathbb{R}^n) = H^{q}_{w^{q/p}}(\mathbb{R}^n)$ are the weighted Hardy 
spaces defined in \cite{tor}.

Thus, Theorem \ref{main thm} applies on such $X$ and $Y$.

\

{\bf Variable Hardy spaces.} Let $p(\cdot) : \mathbb{R}^n \to (0, \infty)$ be a measurable function such that
\[ 
0 < p_{-} := \essinf_{x \in \mathbb{R}^n} p(x) \leq \esssup_{x \in \mathbb{R}^n} p(x) : = p_{+} < \infty.
\]
Then, the variable Lebesgue space $L^{p(\cdot)}(\mathbb{R}^n)$ consists of all the measurable functions $f$ such that
\[
\| f \|_{p(\cdot)} := \inf \left\{ \lambda > 0 : \int_{\mathbb{R}^n} (\lambda^{-1}|f(x)|)^{p(x)} dx \leq 1 \right\} < \infty.
\]
By \cite[Section 7.8]{sawa}, we have that the couple $(L^{p(\cdot)}(\mathbb{R}^n), \| \cdot \|_{p(\cdot)})$ is a ball quasi-Banach function space. Moreover, the quasi-norm $\| \cdot \|_{p(\cdot)}$ is \textit{absolutely continuous}. If $p_{-} > 1$, then $L^{p(\cdot)}(\mathbb{R}^n)$ is a ball Banach function space with $(L^{p(\cdot)}(\mathbb{R}^n))' = L^{p'(\cdot)}(\mathbb{R}^n)$, where 
$\frac{1}{p(x)} + \frac{1}{p'(x)} = 1$ for all $x$. Then, by \cite[Proposition 2.18 and Theorem 2.61]{Fiorenza}, for any $s \in (0, p_{-})$, $(L^{p(\cdot)})^{1/s} = L^{p(\cdot)/s}$ and the space $L^{p(\cdot)}$ is \textit{strictly $s$-convex}.

An exponent $p(\cdot) : \mathbb{R}^n \to (0, \infty)$ is said to be globally log-H\"older continuous if there exist positive constants $C$ and $p_{\infty}$ such that
\[
|p(x) - p(y)| \leq \frac{-C}{\log(|x-y|)}, \,\, \text{for} \,\, |x-y| \leq 1/2
\]
and
\[
|p(x) - p_{\infty}| \leq \frac{C}{\log(e + |x|)}, \,\, \text{for all} \,\, x \in \mathbb{R}^n.
\]
Let $L^{p(\cdot)}(\mathbb{R}^n)$, where $p(\cdot)$ is globally log-H\"older continuous. Then, for 
any $s \in (0, p_{-})$, the Hardy-Littlewood maximal operator $M$ is bounded on $L^{p(\cdot)/s}$ (see \cite[Theorem 1.5]{DCruz}), and 
so also on $(L^{p(\cdot)/s})' = L^{(p(\cdot)/s)'}$, since the exponent $(p(\cdot)/s)'$ results globally log-H\"older continuous with 
$((p(\cdot)/s)')_{-} > 1$. 

Given $0 \leq \alpha < n$, let $p(\cdot)$ be an exponent such that $0 < p_{-} \leq p_{+} < \frac{n}{\alpha}$ and is globally log-H\"older continuous. Then, we define $\frac{1}{q(\cdot)} : = \frac{1}{p(\cdot)} - \frac{\alpha}{n}$, 
such $q(\cdot)$ results globally log-H\"older continuous. Let $X:= L^{p(\cdot)}(\mathbb{R}^n)$ and $Y := L^{q(\cdot)}(\mathbb{R}^n)$. For any 
$p_0 \in (0, \min\{ \frac{n}{n+\alpha}, p_{-} \})$ fixed, we put $\frac{1}{q_0} := \frac{1}{p_0} - \frac{\alpha}{n}$, then 
$0 < p_0 \leq q_0 \leq 1$, $q_0 \in (0, q_{-})$, $X^{1/p_0}$ and $Y^{1/q_0}$ are ball Banach function spaces and $M$ is 
bounded on $(Y^{1/q_0})'$. Since $\frac{1}{p(\cdot)} - \frac{1}{q(\cdot)} = \frac{\alpha}{n} = \frac{1}{p_0} - 
\frac{1}{q_0}$, it follows that $(Y^{1/q_0})' = ((X^{1/p_0})')^{p_0/q_0}$. For $X:= L^{p(\cdot)}$, the assumption (\ref{A1}) 
holds true for any $s \in (0,1]$ and $\theta \in (0, \min\{s, p_{-}\})$ (indeed, we apply \cite[Lemma 2.4]{Nakai} on $L^{p(\cdot)/\theta}$, with $u=s/\theta$ and $|f_j|^{\theta}$ instead of $f_j$); the assumption (\ref{A2}) holds true for any $r \in (0, \min\{ 1, p_{-}\})$ 
and $\widetilde{q} \in (\max \{1, p_{+}\}, \infty)$ (this follows by duality, \cite[Theorem 1.5]{DCruz} and Remark \ref{cond equiv A2}). Similarly, the assumption (\ref{A2}) holds true for $Y= L^{q(\cdot)}$. Finally, one has that 
$H_{X}(\mathbb{R}^n) = H^{p(\cdot)}(\mathbb{R}^n)$ and $H_{Y}(\mathbb{R}^n) = H^{q(\cdot)}(\mathbb{R}^n)$ are the variable Hardy spaces with exponents $p(\cdot)$ and $q(\cdot)$ defined in \cite{Nakai}.

Then, Theorem \ref{main thm} applies on such $X$ and $Y$.

\

{\bf Mixed-norm Hardy spaces.} Fix $\vec{p} := (p_1, ..., p_n) \in (0, \infty)^n$, then the mixed-norm Lebesgue space 
$L^{\vec{p}}(\mathbb{R}^n)$ is defined as the set of all the measurable functions $f$ such that
\[
\| f \|_{\vec{p}} := \left( \int_{\mathbb{R}} \cdot \cdot \cdot 
\left[ \int_{\mathbb{R}} |f(x_1, ..., x_n)|^{p_1} dx_1 \right]^{p_2/p_1} \cdot \cdot \cdot dx_n  \right)^{1/p_n} < \infty.
\]
These spaces were introduced and studied by A. Benedek and R. Panzone in \cite{Benedek}. It is easy to check that the couple 
$(L^{\vec{p}}(\mathbb{R}^n), \| \cdot \|_{\vec{p}})$ is a ball quasi-Banach function space, being its quasi-norm $\| \cdot \|_{\vec{p}}$ \textit{absolutely continuous}. For $\vec{p} \in (1, \infty)^n$, the space $L^{\vec{p}}(\mathbb{R}^n)$ is a ball Banach function space with
$(L^{\vec{p}}(\mathbb{R}^n))' = L^{\vec{p'}}(\mathbb{R}^n)$, where $\vec{p'}$ denotes the $n$-tuple whose components are the conjugate values
of the components of $\vec{p}$, in this case we write $\frac{1}{\vec{p}} + \frac{1}{\vec{p'}} = \vec{1}$ (see \cite[Theorem 1.a)]{Benedek}).
In \cite[Remark 7.21]{Zhang}, it was pointed out that these spaces may not be a Banach function spaces. 

Given $0 \leq \alpha < n$, and $\vec{p} \in (0, \infty)^n$ such that $\sum_{i=1}^n \frac{1}{p_i} \in (\alpha, \infty)$, we define
\[
p_{-} := \min \{ p_1, ..., p_n \}, \,\,\,\, p_{+} := \max \{ p_1, ..., p_n \}, \,\,\,\, \vec{q} := (\frac{np_1}{n - \alpha p_1}, ..., 
\frac{np_n}{n - \alpha p_n}),
\]
$X:=L^{\vec{p}}(\mathbb{R}^n)$ and $Y := L^{\vec{q}}(\mathbb{R}^n)$. By \cite[p. 302, lines 14-19]{Benedek}, for any $s \in (0, p_{-})$, the space $L^{\vec{p}}(\mathbb{R}^n)$ is \textit{strictly $s$-convex}, with $(L^{\vec{p}})^{1/s} = L^{\vec{p}/s}$. Now, for any 
$p_0 \in (0, \min\{ \frac{n}{n+\alpha}, p_{-}\})$, we put $\frac{1}{q_0}:= \frac{1}{p_0} - \frac{\alpha}{n}$, then 
$0 < p_0 \leq q_0 \leq 1$, $q_0 \in (0, q_{-})$, $X^{1/p_0}$ and $Y^{1/q_0}$ are ball Banach function spaces such that 
$(Y^{1/q_0})' = ((X^{1/p_0})')^{p_0/q_0}$, and $M$ is bounded on $(Y^{1/q_0})'$ 
(see \cite[Lemma 3.5]{Huang}). For $X:=L^{\vec{p}}(\mathbb{R}^n)$, the assumption (\ref{A1}) holds true for any $s \in (0,1]$ and 
$\theta \in (0, \min\{s, p_{-}\})$ (apply \cite[Lemma 3.7]{Huang} on $L^{\vec{p}/\theta}$, with $u=s/\theta$ and $|f_j|^{\theta}$ instead 
of $f_j$). Now, by duality, \cite[Lemma 3.5]{Huang}, and Remark \ref{cond equiv A2}, the assumption (\ref{A2}) holds true for 
$r \in (0, \min\{ 1, p_{-}\})$ and $\widetilde{q} \in (\max \{1, p_{+}\}, \infty)$. Similarly, the assumption (\ref{A2}) holds 
true for $Y= L^{\vec{q}}(\mathbb{R}^n)$. Finally, $H_{X}(\mathbb{R}^n) = H^{\vec{p}}(\mathbb{R}^n)$ and $H_{Y}(\mathbb{R}^n) = H^{\vec{q}}(\mathbb{R}^n)$ are the mixed-norm Hardy spaces defined in \cite{Huang}, considering there 
$\vec{a} = \vec{1}$.

So, Theorem \ref{main thm} applies on such $X$ and $Y$.

\

{\bf Hardy-Lorentz spaces.} Given $0 < p, \, q < \infty$, the Lorentz space $L^{p, q}(\mathbb{R}^n)$ is defined as the collection of all the measurable function $f$ such that
\[
\| f \|_{L^{p, q}} := \left( \int_{0}^{\infty} (t^{1/p} f^{\ast}(t))^{q} \, \frac{dt}{t} \right)^{1/q} 
< \infty,
\]
where $f^{\ast}$, the decreasing rearrangement function of $f$, is defined by setting, for any $t \in [0, \infty)$, 
$f^{\ast}(t) := \inf \{ s > 0 : |\{x : |f(x)| > s \}| \leq t \}$.
By \cite[Section 7.3]{sawa}, the couple $\left( L^{p, q}(\mathbb{R}^n), \| \cdot \|_{L^{p, q}} \right)$  is a ball quasi-Banach function space, whose quasi-norm $\| \cdot \|_{L^{p, q}}$ is \textit{absolutely continuous} and satisfies 
$\| |g |^r \|_{L^{p, q}} = \| g \|_{L^{pr, qr}}^r$ for all $0 < p, \, q, r < \infty$. 
For any $s \in (0, \min\{p, q \})$, by \cite[Theorem 4.6]{Bennett}, the space $(L^{p, q})^{1/s} = L^{p/s, q/s}$ is a Banach function space and so $L^{p, q}$ is \textit{strictly $s$-convex} (see \cite[Theorem 1.6]{Bennett}). When 
$1 < p, \, q < \infty$, $(L^{p, q})' = L^{p', q'}$, where $\frac{1}{p} + \frac{1}{p'} = 1$ and 
$\frac{1}{q} + \frac{1}{q'} = 1$ (see \cite[Theorem 4.7]{Bennett}).

Given $0 \leq \alpha < n$ and $0 < p, q < \frac{n}{\alpha}$, we put $\frac{1}{u} := \frac{1}{p} - \frac{\alpha}{n}$ and
$\frac{1}{v} := \frac{1}{q} - \frac{\alpha}{n}$. Now, we consider $X:= L^{p, q}(\mathbb{R}^n)$ and 
$Y:= L^{u, v}(\mathbb{R}^n)$. For any $p_0 \in (0, \min\{ \frac{n}{n+\alpha}, p, q \})$, we put 
$\frac{1}{q_0}:= \frac{1}{p_0} - \frac{\alpha}{n}$, then $0 < p_0 \leq q_0 \leq 1$, $q_0 \in (0, \min\{u, v\})$, $X^{1/p_0}$ 
and $Y^{1/q_0}$ are ball Banach function spaces such that $M$ is bounded on $(Y^{1/q_0})'$ (see ). Since  
$\frac{1}{p} - \frac{1}{u}= \frac{1}{p_0} - \frac{1}{q_0}= \frac{1}{q} - \frac{1}{v}$, we have 
$(Y^{1/q_0})' = ((X^{1/p_0})')^{p_0/q_0}$. For $X:= L^{p, q}(\mathbb{R}^n)$, the assumption (\ref{A1}) holds true for 
any $s \in (0,1]$ and $\theta \in (0, \min\{ s, p, q \})$ (apply conveniently \cite[Theorem 2.3 (iii)]{Curbera}). Now, 
by duality, \cite[Theorem 2.3 (iii)]{Curbera} and Remark \ref{cond equiv A2}, the assumption (\ref{A2}) holds true for any 
$r \in (0, \min\{ 1, p, q \})$ and $\widetilde{q} \in (\max \{1, p, q\}, \infty)$. Similarly, the assumption (\ref{A2}) holds 
true for $Y= L^{u,v}(\mathbb{R}^n)$. Finally, $H_X (\mathbb{R}^n) = H^{p, q}(\mathbb{R}^n)$ and 
$H_Y (\mathbb{R}^n) = H^{u, v}(\mathbb{R}^n)$ are the Hardy-Lorentz spaces defined in \cite{Abu}.

Then, Theorem \ref{main thm} applies on such $X$ and $Y$.


Pablo Rocha, Instituto de Matem\'atica (INMABB), Departamento de Matem\'atica, Universidad Nacional del Sur (UNS)-CONICET, Bah\'ia Blanca, Argentina. \\
{\it e-mail:} pablo.rocha@uns.edu.ar

\end{document}